\documentclass[preprint,12pt]{elsarticle}
\usepackage{amsthm,amsmath,amssymb}
\usepackage{graphicx}
\usepackage{longtable,booktabs}
\usepackage[colorlinks=true,citecolor=black,linkcolor=black,urlcolor=blue]{hyperref}
\usepackage{mathrsfs}
\usepackage{epstopdf}
\usepackage{epsfig}
\allowdisplaybreaks[4]



\theoremstyle{plain}
\newtheorem{thm}{Theorem}
\newtheorem{lem}[thm]{Lemma}
\newtheorem{cor}[thm]{Corollary}

\theoremstyle{definition}

\newtheorem{example}[thm]{Example}

\theoremstyle{remark}
\newtheorem{rem}[thm]{Remark}

\journal{Graphs and Combinatorics}
\begin{document}
\title{The Ihara-zeta function and the spectrum of the join of two semi-regular bipartite graphs}
\author{Xiaotong Li\\
\small School of Mathematical Sciences\\[-0.8ex]
\small Xiamen University\\[-0.8ex]
\small P. R. China\\
Xian'an Jin\footnote{Corresponding author.}\\
\small School of Mathematical Sciences\\[-0.8ex]
\small Xiamen University\\[-0.8ex]
\small P. R. China\\
Qi Yan\\
\small School of Mathematics\\[-0.8ex]
\small China University of Mining and Technology\\[-0.8ex]
\small P. R. China\\
\small{\tt Email: xiaotongli@stu.xmu.edu.cn; xajin@xmu.edu.cn; qiyan@cumt.edu.cn}
}

\begin{abstract}
In this paper, using matrix techniques, we compute the Ihara-zeta function and the number of spanning trees of the join of two semi-regular bipartite graphs. Furthermore, we show that the spectrum and the zeta function of the join of two semi-regular bipartite graphs can determine each other.
\end{abstract}

\begin{keyword}
Ihara-zeta function, semi-regular bipartite graph, join, spanning tree, spectrum.
\vskip0.2cm

\end{keyword}

\maketitle

\section{Introduction}
The {\it Ihara-zeta function} \cite{I} of a finite connected graph (without degree-1 vertices) $G$ was originally defined to be the following function of the complex number $u$ with the modulus $|u|$ sufficiently small:
\begin{eqnarray*}
Z_G(u)=\prod_{[C]}(1-u^{|C|})^{-1},
\end{eqnarray*}
where the product is over all equivalent classes $[C]$ of primitive, backtrack-less, tailless closed paths $C$ and $|C|$ is the length of $C$. We will refer to Ihara-zeta functions of graphs as zeta functions of graphs when there is no ambiguity. We refer the reader to \cite{AT} for acquiring in-depth knowledge of zeta functions of graphs.

Suppose the vertex set of a graph $G$ is $\{v_1,v_2,\cdots,v_n\}$, and the corresponding degrees of $v_1,v_2,\cdots,v_n$ are $d_1,d_2,\cdots,d_n$. Then the diagonal matrix $D(G):=diag (d_1,d_2,\cdots,d_n)$ is called the {\it degree matrix} of $G$. The {\it adjacency matrix} of $G$ is the $n\times n$ matrix $A(G):=(a_{ij})$, where $a_{ij}$ is the number of edges joining vertices $v_i$ and $v_j$, each loop counting as two edges. Let $Q(G):=D(G)-I_n$, where $I_n$ is the identity matrix of order $n$. Let
\begin{eqnarray*}
f_G(u)=\det(I_n-uA(G)+u^2Q(G)).
\end{eqnarray*}
Bass \cite{B} proved that the reciprocal of the zeta function can be computed from the determinant $f_G(u)$:
\begin{eqnarray*}
Z_G(u)^{-1}=(1-u^2)^{m-n}f_G(u),
\end{eqnarray*}
where $m$ is the number of edges of $G$.

The zeta function of a graph $G$ contains much information of the graph such as the number of edges and the number of loops of the graph \cite{CD}, the number of vertices of the graph \cite{CO}. Let $\tau(G)$ be the number of spanning trees (i.e. the complexity) of $G$. Then Northshield \cite{N} proved:
\begin{eqnarray*}
f'_G(1)=2(m-n)\tau(G).
\end{eqnarray*}

By now, zeta functions of many graph families have been studied. Sato \cite {SA} obtained formulas of the zeta function and the complexity of the line graph of a semi-regular bipartite graph. He \cite {SAT} further obtained the zeta function and the complexity of the middle graph of a semi-regular bipartite graph. Bayati and Somodi \cite {BP} derived a formula of the zeta function of the cone over a regular graph, and showed that the zeta function and the adjacency spectrum of the cone over a regular graph can determine each other. Blanchard et al. \cite {BA} studied the zeta function of the $r$-join of two regular graphs, and proved that their zeta functions are the same if and only if they have the same adjacency spectrum. Recently, Chen and Chen \cite {C} studied Bartholdi zeta functions of the generalized join of some regular graphs along a fixed graph, Li and Hou \cite {LDQ} studied the zeta function of the cone of a semi-regular bipartite graph and Li et al. \cite {LLH} studied zeta functions of three types of corona graphs.

Motivated by the above works, in this article, we study the zeta function and the number of spanning trees of the join of two semi-regular bipartite graphs by means of the matrix techniques. Furthermore, we show that the spectrum and the zeta function of the join of two semi-regular bipartite graphs can determine each other.

\section{Preliminaries}\label{s2}
If the vertex set $V(G)$ of a graph $G$ can be partitioned into two non-empty subsets $V_1(G)$ and $V_2(G)$, such that each edge of $G$ has one end in $V_1(G)$ and the other end in $V_2(G)$, then $G$ is called a {\it bipartite graph}. In particular, if the degree of each vertex of $V_1(G)$ and $V_2(G)$ are $q_1$ and $q_2$, respectively, then $G$ is called a $(q_1,q_2)$-{\it semi-regular bipartite graph}.

The {\it join} $G_1\vee G_2$ of two graphs $G_1$ and $G_2$ is defined to be the graph with $V(G_1\vee G_2)=V(G_1)\cup V(G_2)$ and $E(G_1\vee G_2)=E(G_1)\cup E(G_2)\cup \{uv|u\in V(G_1),v\in V(G_2)\}$.

The polynomial $\Phi_A(G, x)=\det(xI_n-A(G))$ is said to be the {\it characteristic polynomial} of the graph $G$. The set of all zeros of the characteristic polynomial of the graph $G$ (i.e. all eigenvalues of its adjacency matrix) is said to be the {\it spectrum} of the graph $G$, denoted by $spec(G)$. Similarly, $\Phi_L(G, x)=\det(xI_n-L(G))$ is said to be the {\it Laplacian characteristic polynomial} of the graph $G$, where $L(G)=D(G)-A(G)$ is the {\it Laplacian matrix} of the graph $G$, and the set of all zeros of the Laplacian characteristic polynomial of the graph $G$ (i.e. all eigenvalues of its Laplacian matrix) is said to be the {\it Laplacian spectrum} of the graph $G$.

The following notations will be used throughout this article.

$G_1$: $(q_1,q_2)$-semi-regular bipartite graph with  $|V_1|=n_1$ and $|V_2|=n_2$ $(n_1\leq n_2)$. Let $\nu_1=n_1+n_2$ and $\epsilon_1=n_1q_1=n_2q_2$.

$G_2$: $(q_3,q_4)$-semi-regular bipartite graph with $|V_3|=n_3$ and $|V_4|=n_4$ $(n_3\leq n_4)$. Let $\nu_2=n_3+n_4$ and $\epsilon_2=n_3q_3=n_4q_4$.

$G_{2}^{'}$: $(q_{3}',q_{4}')$-semi-regular bipartite graph with $|V_{3}'|=n_{3}'$ and $|V_{4}'|=n_{4}'$ $(n_{3}'\leq n_{4}')$. Let $ \nu_{2}'=n_{3}'+n_{4}'$ and $\epsilon_{2}'=n_{3}'q_{3}'=n_{4}'q_{4}'$.

$J_{p\times q}$: $p\times q$ matrix with all entries equal to 1.

$1_n$: a column vector of size $n$ with all entries equal to 1.

$A^{T}$: the transpose of the matrix $A$.

$A^\ast$: the adjugate matrix of $A$.

The following three lemmas \cite{LLH, LZ, Z} on matrix techniques will be used. Let $M_{11},M_{12},M_{21}$ and $M_{22}$ be $n_1\times n_1, n_1\times n_2, n_2\times n_1$ and $n_2\times n_2$ matrices, respectively. Let
$$M=\left[
\begin{array}{ll}
M_{11}&M_{12}\\
M_{21}&M_{22}\
\end{array}
\right].$$

\begin{lem}\label{lem3.1.3}
\[\det M=\left\{\begin{array}{ll}
\det(M_{11})\times \det(M_{22}-M_{21}M_{11}^{-1}M_{12}),& \text{if $M_{11}$ is invertible};\\
\det(M_{22})\times \det(M_{11}-M_{12}M_{22}^{-1}M_{21}),& \text{if $M_{22}$ is invertible}.
\end{array}\right.\]
\end{lem}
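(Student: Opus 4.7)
The statement is the classical Schur complement identity, and the natural route is a block LU (or UL) factorization. The plan is to exhibit $M$ as a product of two triangular block matrices whose determinants are easy to read off, and then apply multiplicativity of the determinant.

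First I would handle the case where $M_{11}$ is invertible. The key observation is the factorization
\begin{equation*}
\begin{pmatrix} M_{11} & M_{12} \\ M_{21} & M_{22} \end{pmatrix}
=
\begin{pmatrix} I_{n_1} & 0 \\ M_{21}M_{11}^{-1} & I_{n_2} \end{pmatrix}
\begin{pmatrix} M_{11} & M_{12} \\ 0 & M_{22}-M_{21}M_{11}^{-1}M_{12} \end{pmatrix},
\end{equation*}
which one checks by direct multiplication. The first factor is block lower-triangular with identity blocks on the diagonal, so its determinant is $1$. The second factor is block upper-triangular with diagonal blocks $M_{11}$ and the Schur complement $M_{22}-M_{21}M_{11}^{-1}M_{12}$; its determinant is therefore $\det(M_{11})\cdot\det(M_{22}-M_{21}M_{11}^{-1}M_{12})$. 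Taking the determinant of both sides of the factorization yields the first formula.

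For the second case, where $M_{22}$ is invertible, I would use the symmetric factorization
\begin{equation*}
\begin{pmatrix} M_{11} & M_{12} \\ M_{21} & M_{22} \end{pmatrix}
=
\begin{pmatrix} I_{n_1} & M_{12}M_{22}^{-1} \\ 0 & I_{n_2} \end{pmatrix}
\begin{pmatrix} M_{11}-M_{12}M_{22}^{-1}M_{21} & 0 \\ M_{21} & M_{22} \end{pmatrix},
\end{equation*}
and repeat the same argument with the roles of upper/lower triangular blocks swapped. The determinant of the left factor is again $1$, while the right factor is block lower-triangular, giving $\det(M_{22})\cdot\det(M_{11}-M_{12}M_{22}^{-1}M_{21})$.

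There is no real obstacle here; the only thing to be slightly careful about is verifying the block multiplications and making sure the identity and zero blocks have the correct sizes $n_1$ and $n_2$ so that everything conforms. Once the two factorizations are written down, the lemma follows immediately from the multiplicativity of the determinant and the fact that the determinant of a block triangular matrix is the product of the determinants of its diagonal blocks.
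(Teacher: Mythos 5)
Your proof is correct: both block factorizations check out by direct multiplication, and the determinant of each unitriangular factor is $1$ while the block-triangular factor contributes $\det(M_{11})$ (resp.\ $\det(M_{22})$) times the determinant of the corresponding Schur complement. Note that the paper does not actually prove this lemma --- it is quoted from the literature (the Schur complement references) without proof --- so there is no argument to compare against; your block LU/UL factorization is the standard and entirely adequate justification.
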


\begin{lem}\label{lem3.1.1}
Let $n=n_1+n_2$. If $M$ is invertible, the row sums of $M_{11}, M_{12}, M_{21}$ and $M_{22}$ are $r_1, r_2, r_3$ and $r_4$, respectively and $r_{1}r_{4}-r_{2}r_{3}\neq 0$, then
\begin{eqnarray*}
1_n^TM^{-1}1_n=\frac{n_1(r_4-r_2)+n_2(r_1-r_3)}{r_1r_4-r_2r_3}.
\end{eqnarray*}
\end{lem}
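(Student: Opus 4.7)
The plan is to exploit the constant row-sum structure in each of the four blocks to reduce the inversion problem to a $2\times 2$ linear system. The first step is to rewrite the row-sum hypotheses as the vector identities $M_{11} 1_{n_1} = r_1 1_{n_1}$, $M_{12} 1_{n_2} = r_2 1_{n_1}$, $M_{21} 1_{n_1} = r_3 1_{n_2}$, and $M_{22} 1_{n_2} = r_4 1_{n_2}$. This structure motivates the ansatz that $M^{-1} 1_n$ is piecewise constant on the two index blocks, i.e.\ of the form $v = (x\, 1_{n_1}^T, y\, 1_{n_2}^T)^T$ for some scalars $x, y$; by the assumed invertibility of $M$, if such a $v$ solving $Mv = 1_n$ can be exhibited then it must equal $M^{-1} 1_n$ by uniqueness.

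Next, substituting the ansatz into $Mv = 1_n$ and collapsing each block equation using the identities above gives $(r_1 x + r_2 y) 1_{n_1} = 1_{n_1}$ from the top block and $(r_3 x + r_4 y) 1_{n_2} = 1_{n_2}$ from the bottom block. This reduces the problem to the $2 \times 2$ linear system
\begin{align*}
r_1 x + r_2 y &= 1,\\
r_3 x + r_4 y &= 1,
\end{align*}
whose coefficient determinant is exactly $r_1 r_4 - r_2 r_3$, nonzero by hypothesis; Cramer's rule then yields $x$ and $y$ explicitly.

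Finally, I would compute $1_n^T M^{-1} 1_n = 1_n^T v = n_1 x + n_2 y$ and insert the values of $x$ and $y$ to obtain the claimed closed form. There is no substantive obstacle here: the one conceptual step is the piecewise-constant ansatz for $M^{-1} 1_n$, and the nonsingularity condition $r_1 r_4 - r_2 r_3 \neq 0$ is precisely what makes that ansatz consistent and uniquely solvable, after which everything reduces to a short routine calculation.
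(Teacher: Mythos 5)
Your proof is correct: the piecewise-constant ansatz $v=(x\,1_{n_1}^T,\,y\,1_{n_2}^T)^T$ collapses $Mv=1_n$ to the $2\times 2$ system with determinant $r_1r_4-r_2r_3\neq 0$, uniqueness of the solution (from invertibility of $M$) forces $M^{-1}1_n=v$, and $n_1x+n_2y$ gives exactly the stated formula. The paper itself offers no proof of this lemma --- it is quoted from the references \cite{LLH, LZ, Z} --- and your argument is the standard one used there, so there is nothing to add.
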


\begin{lem}\label{lem3.1.2}
Let $A$ be an $n\times n$ real matrix and $\alpha$ be a real number. Then
\begin{eqnarray*}
\det(A+\alpha J_{n\times n})\!=\!\det A+\alpha1_n^TA^\ast1_n.
\end{eqnarray*}
\end{lem}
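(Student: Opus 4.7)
The plan is to exploit the multilinearity of the determinant together with the fact that $J_{n\times n}=1_n 1_n^T$ has rank one. Writing the $i$-th row of $A$ as $a_i^T$, the $i$-th row of $A+\alpha J_{n\times n}$ is $a_i^T+\alpha\,1_n^T$. Expanding $\det$ multilinearly in these rows gives
\begin{equation*}
\det(A+\alpha J_{n\times n})=\sum_{S\subseteq\{1,\dots,n\}}\alpha^{|S|}\det(A_S),
\end{equation*}
where $A_S$ is obtained from $A$ by replacing row $i$ with $1_n^T$ for every $i\in S$. Whenever $|S|\geq 2$, the matrix $A_S$ has two identical rows, so $\det(A_S)=0$; only the empty and singleton terms survive.

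Next, I would evaluate each surviving singleton contribution by Laplace expansion: $\det(A_{\{i\}})$, expanded along its $i$-th row (a row of ones), equals $\sum_{j=1}^n(-1)^{i+j}M_{ij}$, where $M_{ij}$ is the $(i,j)$-minor of $A$. By the definition of the adjugate this is precisely the $i$-th column sum $\sum_j(A^\ast)_{ji}$ of $A^\ast$. Summing over $i$ then produces the total sum of the entries of $A^\ast$, which is exactly $1_n^T A^\ast 1_n$, establishing $\det(A+\alpha J_{n\times n})=\det A+\alpha\,1_n^T A^\ast 1_n$.

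The only point requiring care is the adjugate indexing convention $(A^\ast)_{ji}=(-1)^{i+j}M_{ij}$, which builds in a transpose of cofactor indices; once that is tracked correctly, the argument is purely combinatorial and requires no invertibility hypothesis on $A$. A cleaner alternative that reuses a tool already stated in the paper is to consider the bordered matrix $\left[\begin{smallmatrix}A & -\alpha 1_n\\ 1_n^T & 1\end{smallmatrix}\right]$ and compute its determinant two ways via Lemma~\ref{lem3.1.3}: reducing out the $1\times 1$ block yields $\det(A+\alpha J_{n\times n})$, while reducing out $A$ (after first perturbing $A$ to $A+tI$ to guarantee invertibility and then letting $t\to 0$, since both sides are polynomials in $t$) yields $\det A+\alpha\,1_n^T A^\ast 1_n$, via the identity $A^{-1}=A^\ast/\det A$.
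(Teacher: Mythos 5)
Your proof is correct, but note that the paper itself offers no proof of this lemma: it is quoted, together with Lemmas~\ref{lem3.1.3} and \ref{lem3.1.1}, from the references \cite{LLH, LZ, Z}, so there is no in-paper argument to compare against. Your first argument (multilinearity in the rows, killing every term with two replaced rows, then cofactor expansion of the surviving singleton terms) is the standard rank-one update computation and is carried out correctly; one small simplification is that the adjugate indexing convention you flag as ``the only point requiring care'' is actually immaterial here, since $1_n^T A^\ast 1_n$ is the sum of \emph{all} entries of $A^\ast$ and that total is unchanged by transposition, i.e.\ by whether one sums cofactors by rows or by columns. Your second, bordered-matrix argument is also valid and has the merit of reusing Lemma~\ref{lem3.1.3} (and is essentially the matrix-determinant-lemma route), with the $A+tI$ perturbation correctly handling the non-invertible case because both sides are polynomials in $t$. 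Either argument would serve as a complete, self-contained proof of the lemma; the first is the more elementary and needs no auxiliary hypotheses at any stage.
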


In \cite{H}, Hashimoto obtained the spectrum of semi-regular bipartite graphs as follows.
\begin{lem}\label{lem3.1.4}
Let $G_1$ be a $(q_1,q_2)$-semi-regular bipartite graph.
Then
\begin{eqnarray*}
spec(G_1)=\{ \pm \lambda _1,\pm\lambda _2,\cdot \cdot \cdot ,\pm\lambda _{n_1},\overbrace{0,\cdot \cdot \cdot ,0}^{n_2-n_1}\},
\end{eqnarray*}
where $\sqrt{q_1q_2}=\lambda _1\geq\lambda _2\geq\cdot \cdot \cdot \geq\lambda _{n_1}\geq0$.
\end{lem}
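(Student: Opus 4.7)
The plan is to exploit the block structure of the adjacency matrix of a semi-regular bipartite graph together with the standard fact that $BB^T$ and $B^TB$ have the same nonzero spectrum. Ordering the vertices so that $V_1$ precedes $V_2$, the adjacency matrix takes the form $A(G_1)=\bigl(\begin{smallmatrix}0 & B \\ B^T & 0\end{smallmatrix}\bigr)$, where the biadjacency matrix $B$ is an $n_1\times n_2$ $0/1$ matrix whose rows sum to $q_1$ and whose columns sum to $q_2$. Squaring gives the block-diagonal matrix $A(G_1)^2=\bigl(\begin{smallmatrix}BB^T & 0 \\ 0 & B^TB\end{smallmatrix}\bigr)$, whose spectrum is the multiset union of the spectra of the two diagonal blocks.

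Next I would use the singular-value-decomposition-style fact that the $n_1\times n_1$ matrix $BB^T$ and the $n_2\times n_2$ matrix $B^TB$ share the same nonzero eigenvalues with the same multiplicities. Letting $\mu_1\geq\cdots\geq\mu_{n_1}\geq 0$ be the eigenvalues of the positive semi-definite matrix $BB^T$, the spectrum of $B^TB$ is then $\mu_1,\ldots,\mu_{n_1}$ together with $n_2-n_1$ additional zeros. Hence each $\mu_i$ appears twice as an eigenvalue of $A(G_1)^2$, accompanied by exactly $n_2-n_1$ extra zeros. I would then invoke bipartiteness through the signature matrix $S=\mathrm{diag}(I_{n_1},-I_{n_2})$, which satisfies $SA(G_1)S^{-1}=-A(G_1)$; thus $A(G_1)$ and $-A(G_1)$ are similar, and the spectrum of $A(G_1)$ is symmetric about $0$. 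Combined with the fact that eigenvalues of $A(G_1)^2$ are the squares of the eigenvalues of $A(G_1)$, this forces the spectrum of $A(G_1)$ to be $\pm\sqrt{\mu_1},\ldots,\pm\sqrt{\mu_{n_1}}$ together with $n_2-n_1$ zeros. Setting $\lambda_i:=\sqrt{\mu_i}$ gives the claimed form.

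To finish I would verify $\lambda_1=\sqrt{q_1q_2}$ by exhibiting an explicit Perron eigenvector: the vector $v=(\sqrt{q_2}\,1_{n_1}^T,\,\sqrt{q_1}\,1_{n_2}^T)^T$ satisfies $A(G_1)v=\sqrt{q_1q_2}\,v$ directly from the row/column-sum conditions on $B$, and Perron–Frobenius (applied to the nonnegative symmetric matrix $A(G_1)$) ensures that $\sqrt{q_1q_2}$ is the spectral radius rather than some smaller positive eigenvalue.

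The main obstacle is really just the multiplicity bookkeeping when some $\mu_i$ happens to vanish: in that case the pair $\pm\sqrt{\mu_i}$ collapses into additional zero eigenvalues, so one must check that the total count $2n_1+(n_2-n_1)=\nu_1$ matches the order of $A(G_1)$. This is handled automatically by phrasing everything in terms of multiset unions, so no serious difficulty arises.
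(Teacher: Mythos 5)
The paper does not actually prove this lemma; it is quoted from Hashimoto \cite{H} without proof, so there is no in-paper argument to compare against. Your proof is the standard one (block form of $A(G_1)$, equality of the nonzero spectra of $BB^T$ and $B^TB$, and the sign-flip similarity $SA(G_1)S^{-1}=-A(G_1)$ to get symmetry of the spectrum about $0$), and the structure of the argument is sound, including the multiplicity bookkeeping when some $\mu_i=0$.

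There is, however, one concrete error: your Perron eigenvector has its weights on the wrong sides. With $B$ of size $n_1\times n_2$, row sums $q_1$ and column sums $q_2$, the vector $v=(\sqrt{q_2}\,1_{n_1}^T,\ \sqrt{q_1}\,1_{n_2}^T)^T$ gives
\[
A(G_1)v=\bigl(q_1\sqrt{q_1}\,1_{n_1}^T,\ q_2\sqrt{q_2}\,1_{n_2}^T\bigr)^T,
\]
which equals $\sqrt{q_1q_2}\,v$ only when $q_1=q_2$. The correct choice is $v=(\sqrt{q_1}\,1_{n_1}^T,\ \sqrt{q_2}\,1_{n_2}^T)^T$, since then $B(\sqrt{q_2}\,1_{n_2})=q_1\sqrt{q_2}\,1_{n_1}=\sqrt{q_1q_2}\cdot\sqrt{q_1}\,1_{n_1}$ and $B^T(\sqrt{q_1}\,1_{n_1})=q_2\sqrt{q_1}\,1_{n_2}=\sqrt{q_1q_2}\cdot\sqrt{q_2}\,1_{n_2}$. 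Equivalently, you can avoid the eigenvector altogether by noting that $BB^T$ is nonnegative with constant row sums $q_1q_2$, so $\rho(BB^T)=q_1q_2$ and hence $\rho(A(G_1))=\sqrt{q_1q_2}$. Your appeal to Perron--Frobenius is otherwise fine: a strictly positive eigenvector of a nonnegative matrix always belongs to the spectral radius (Collatz--Wielandt), so you do not even need connectivity, although the paper's setting assumes it anyway.
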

\begin{rem}
In Lemma \ref{lem3.1.4}, $\lambda_{n_1}$ may be 0. For example, the spectrum of the complete bipartite graph $K_{2,3}$ is $\{\pm\sqrt6,0,0,0\}$.
\end{rem}

We assume that the number of positive eigenvalues of adjacency matrices of $G_1$ and $G_2$ are $k_1$ and $k_2$, respectively, so the spectrum of $G_1$ and $G_2$ can be written as:
\begin{eqnarray*}
spec(G_1)=\{ \pm \lambda _1,\pm\lambda _2,\cdot \cdot \cdot ,\pm\lambda _{k_1},\overbrace{0,\cdot \cdot \cdot ,0}^ {\nu_1-2k_1}\},\\
spec(G_2)=\{ \pm \mu_1,\pm\mu _2,\cdot \cdot \cdot ,\pm\mu _{k_2},\overbrace{0,\cdot \cdot \cdot ,0}^{ \nu_2-2k_2}\},
\end{eqnarray*}
where $k_1\leq n_1$ and $k_2\leq n_3$.

\section{The spectrum and the zeta function}\label{s3}

In this section we study the spectrum and the zeta function of the join of two semi-regular bipartite graphs.

\begin{thm}\rm\label{2.7}
Let $G=G_1\vee G_2$.  Let $\theta_1,\theta_2,\theta_3$ and $\theta_4$ be zeros of
$f(\lambda)=\lambda^4-(q_1q_2+q_3q_4+\nu_1\nu_2)\lambda^2-2(\nu_1\epsilon_2+\nu_2\epsilon_1)\lambda+q_1q_2q_3q_4-4\epsilon_1\epsilon_2.$
Then
$$
spec(G)=\{\theta_1,\theta_2,\theta_3,\theta_4,\pm\lambda_2,\cdots,\pm\lambda_{k_{1}},\pm\mu_2,\cdots,\pm\mu_{k_{2}},\overbrace{0,\cdot\cdot\cdot\cdot\cdot\cdot,0}^{\nu_1-2k_1+\nu_2-2k_2}\}.
$$
\end{thm}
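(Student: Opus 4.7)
The plan is to split $\mathbb{R}^{\nu_1+\nu_2}$ as an orthogonal direct sum $W\oplus W^{\perp}$, where $W$ is the four-dimensional subspace spanned by the characteristic vectors $e_1=(1_{n_1},0,0,0)^T$, $e_2=(0,1_{n_2},0,0)^T$, $e_3=(0,0,1_{n_3},0)^T$, $e_4=(0,0,0,1_{n_4})^T$ of the four parts of $V(G)$. After checking that $A(G)$ preserves both summands, the four exceptional eigenvalues $\theta_1,\ldots,\theta_4$ will come from the $4\times 4$ restriction of $A(G)$ to $W$, while every remaining eigenvalue will be inherited from $A(G_1)$ and $A(G_2)$ via the restriction to $W^{\perp}$.

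First I would write
\[
A(G)=\begin{pmatrix}
0 & B_1 & J_{n_1\times n_3} & J_{n_1\times n_4}\\
B_1^{T} & 0 & J_{n_2\times n_3} & J_{n_2\times n_4}\\
J_{n_3\times n_1} & J_{n_3\times n_2} & 0 & B_2\\
J_{n_4\times n_1} & J_{n_4\times n_2} & B_2^{T} & 0
\end{pmatrix},
\]
where $B_1,B_2$ are the biadjacency matrices of $G_1,G_2$. Semi-regularity gives $B_1 1_{n_2}=q_1 1_{n_1}$, $B_1^{T}1_{n_1}=q_2 1_{n_2}$ (and the analogous identities for $B_2$), and combined with $J_{p\times q}1_q=q\,1_p$ this makes $A(G)W\subseteq W$ immediate; symmetry of $A(G)$ then forces $A(G)W^{\perp}\subseteq W^{\perp}$.

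On $W^{\perp}$ every $J$-block annihilates its argument, because the relevant coordinate sums vanish. Hence $A(G)|_{W^{\perp}}$ acts as $A(G_1)\oplus A(G_2)$ restricted to the orthogonal complement of the two two-dimensional ``indicator subspaces''. Since each $A(G_i)$ preserves its indicator subspace with eigenvalues $\pm\sqrt{q_1q_2}$ (respectively $\pm\sqrt{q_3q_4}$), Lemma~\ref{lem3.1.4} identifies the $W^{\perp}$-spectrum as $\pm\lambda_2,\ldots,\pm\lambda_{k_1},\pm\mu_2,\ldots,\pm\mu_{k_2}$ together with $(\nu_1-2k_1)+(\nu_2-2k_2)$ zeros, accounting for exactly $\nu_1+\nu_2-4$ eigenvalues of $G$.

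The remaining four eigenvalues are those of $A(G)|_{W}$. A direct computation using the identities above shows that in the basis $\{e_1,e_2,e_3,e_4\}$ this restricted operator is represented by
\[
M=\begin{pmatrix}
0 & q_1 & n_3 & n_4\\
q_2 & 0 & n_3 & n_4\\
n_1 & n_2 & 0 & q_3\\
n_1 & n_2 & q_4 & 0
\end{pmatrix}.
\]
The main, and really the only substantive, obstacle is then the purely algebraic task of verifying $\det(\lambda I_4-M)=f(\lambda)$. I would expand this $4\times 4$ determinant by applying Lemma~\ref{lem3.1.3} to the two diagonal $2\times 2$ blocks, simplify with $\nu_1=n_1+n_2$, $\nu_2=n_3+n_4$, $\epsilon_1=n_1q_1=n_2q_2$ and $\epsilon_2=n_3q_3=n_4q_4$, and match the coefficients of $\lambda^3,\lambda^2,\lambda,1$ against the stated quartic to finish.
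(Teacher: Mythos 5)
Your proposal is correct, and it takes a genuinely different route from the paper. The paper computes $\Phi_A(G,\lambda)$ head-on by Schur complements (Lemma \ref{lem3.1.3}) together with the row-sum and adjugate lemmas (Lemmas \ref{lem3.1.1} and \ref{lem3.1.2}), obtaining $\Phi_A(G,\lambda)$ as $(1-c_1c_2)$ times the characteristic polynomials of $G_1$ and $G_2$, and then recognizes $f(\lambda)=(1-c_1c_2)(\lambda^2-q_1q_2)(\lambda^2-q_3q_4)$. You instead use the equitable partition of $V(G)$ into its four parts: I checked that your quotient matrix $M$ is exactly the matrix of $A(G)|_W$ in the basis $\{e_1,e_2,e_3,e_4\}$ (e.g.\ $A(G)e_1=q_2e_2+n_1e_3+n_1e_4$ gives the first column), that $A(G)|_{W^\perp}=\left(A(G_1)\oplus A(G_2)\right)|_{W^\perp}$ since every $J$-block kills a vector whose part-sums vanish, and that the remaining algebraic identity does hold: applying Lemma \ref{lem3.1.3} to the $2\times2$ blocks of $\lambda I_4-M$ and using $n_3q_3=n_4q_4=\epsilon_2$, $n_1q_1=n_2q_2=\epsilon_1$ yields $\det(\lambda I_4-M)=(\lambda^2-q_1q_2)(\lambda^2-q_3q_4)-(\nu_1\lambda+2\epsilon_1)(\nu_2\lambda+2\epsilon_2)=f(\lambda)$. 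One point worth making explicit (it is harmless even when $\lambda_1=\lambda_2$, i.e.\ when $G_1$ is disconnected): the passage from $spec(G_1)$ to the $W_1^\perp$-spectrum removes exactly one copy each of $\pm\sqrt{q_1q_2}$, as a multiset. Your argument is shorter and more conceptual, and it dispenses with Lemmas \ref{lem3.1.1} and \ref{lem3.1.2} entirely; the paper's determinant manipulations have the separate advantage that the identical computation is reused almost verbatim to prove Theorem \ref{thm3.2.1} for the zeta function, where no invariant-subspace shortcut is available.
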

\begin{proof}
Note that the adjacency matrix $A(G)$ of $G$ can be written as:
\begin{eqnarray*}
\left[
\begin{array}{cccc}
0   &  E    & J_{n_1\times n_3} & J_{n_1\times n_4}\\
E^T &  0    & J_{n_2\times n_3} & J_{n_2\times n_4}\\
J_{n_3\times n_1} &J_{n_3\times n_2}  &0     &F\\
J_{n_4\times n_1} &J_{n_4\times n_2}  &F^T   &0\\
\end{array}
\right].
\end{eqnarray*}
By Lemma \ref{lem3.1.3}, the characteristic polynomial $\Phi_A(G,\lambda)$ of $G$ is

\begin{eqnarray*}
 \Phi_A(G,\lambda)\!\!\!&=&\!\!\!\det\left[
                  \begin{array}{cccc}
\lambda I_{n_1}   &  -E                 &      -J_{n_1\times n_3}&-J_{n_1\times n_4}\\
-E^T              & \lambda I_{n_2}     & -J_{n_2\times n_3}     &-J_{n_2\times n_4}\\
-J_{n_3\times n_1}&-J_{n_3\times n_2}   &\lambda I_{n_3}         &-F\\
-J_{n_4\times n_1}&-J_{n_4\times n_2}   &-F^T                    &\lambda I_{n_4}\\
                  \end{array}
                  \right]\\
                  \!\!\!&=&\!\!\!
                  \det \left[\left[
                  \begin{array}{cc}
                  \lambda I_{n_1}&  -E\\
                  -E^T                         & \lambda I_{n_2}\\
                  \end{array}
                  \right]-J_{\nu_1\times \nu_2}\left[
                  \begin{array}{cc}
                 \lambda I_{n_3}      &-F\\
                  -F^T                               &\lambda I_{n_4}\\
                  \end{array}
             \right] ^{-1}J_{\nu_2\times \nu_1}\right]\\
             & &\!\!\!\det \left[
                  \begin{array}{cc}
                  \lambda I_{n_3}      &-F\\
                  -F^T                               &\lambda I_{n_4}\\
                   \end{array}
                  \right].\label{eq1}
\end{eqnarray*}
Note that the sum of entries of each row (resp. column) of $E$ is $q_1$ (resp. $q_2)$ and the sum of entries of each row (resp. column) of $F$ is $q_3$ (resp. $q_4)$. By Lemma \ref{lem3.1.1} with $r_1=r_4=\lambda, r_2=-q_3, r_3=-q_4$, we obtain
\begin{eqnarray*}
1_{\nu_2}^T\left[
                  \begin{array}{cc}
                 \lambda I_{n_3}&-F\\
                  -F^T& \lambda I_{n_4}\\
                  \end{array}
             \right] ^{-1}1_{\nu_2}&=&\frac {n_3(\lambda+q_3)+n_4(\lambda+q_4)}{\lambda^2-q_3q_4}.
\end{eqnarray*}
Hence,
\begin{eqnarray*}
J_{\nu_1\times \nu_2}\left[
                  \begin{array}{cc}
                  \lambda I_{n_3}&-F\\
                  -F^T& \lambda I_{n_4}\\
                  \end{array}
             \right] ^{-1}J_{\nu_2\times \nu_1}&=&c_{1}J_{\nu_{1}\times \nu_1},
\end{eqnarray*}
where
$c_1=\frac {n_3(\lambda+q_3)+n_4(\lambda+q_4)}{\lambda^2-q_3q_4}$.
By Lemma \ref{lem3.1.2},
\begin{eqnarray*}
& &\det\left[\left[
                  \begin{array}{cc}
                  \lambda I_{n_1}&-E\\
-E^T& \lambda I_{n_2}\\
                  \end{array}
             \right]-c_1J_{\nu_1\times \nu_1}\right]\nonumber\\
             &=&\det\left[
                  \begin{array}{cc}
                  \lambda I_{n_1}&-E\\
-E^T& \lambda I_{n_2}\\
                  \end{array}
             \right]-c_1 1_{\nu_1}^{T}\left[
                  \begin{array}{cc}
                  \lambda I_{n_1}&-E\\
-E^T& \lambda I_{n_2}\\
                  \end{array}
             \right]^\ast1_{\nu_1}\\
             &=&
             \det\left[
                  \begin{array}{cc}
                  \lambda I_{n_1}&-E\\
-E^T& \lambda I_{n_2}\\
                  \end{array}
             \right]\left(1-c_11_{\nu_1}^{T}\left[
                  \begin{array}{cc}
                  \lambda I_{n_1}&-E\\
-E^T& \lambda I_{n_2}\\
                  \end{array}
             \right]^{-1}1_{\nu_1}\right)\\
             &=&
             \det\left[
                  \begin{array}{cc}
                  \lambda I_{n_1}&-E\\
-E^T& \lambda I_{n_2}\\
                  \end{array}
             \right]\left(1-c_1c_2\right),\label{eq2}
\end{eqnarray*}
where $c_2=\frac {n_1(\lambda+q_1)+n_2(\lambda+q_2)}{\lambda^2-q_1q_2}.$ Then
\begin{eqnarray*}
\Phi_A(G,\lambda)\!\!\!&=&\!\!\!\det  \left[
                  \begin{array}{cc}
                  \lambda I_{n_3}&-F\\
                  -F^T& \lambda I_{n_4}\\
                  \end{array}
                 \right]\det\left[
                  \begin{array}{cc}
                  \lambda I_{n_1}&-E\\
-E^T& \lambda I_{n_2}\\
                  \end{array}
             \right](1-c_1c_2).\label{eq1}
\end{eqnarray*}
Let $spec(G_1^2)$ and $spec(G_2^2)$ be the spectra of $A^2(G_1)=\left[
\begin{array}{ll}
EE^T&0\\
0&E^TE\\
\end{array}
\right]$ and $A^2(G_2)=\left[
\begin{array}{ll}
FF^T&0\\
0&F^TF\\
\end{array}
\right]$, respectively. Then
\begin{eqnarray*}
spec(G_1^2)=\{\lambda _1^2,\lambda _2^2,\cdot \cdot \cdot ,\lambda _{k_1}^2,\lambda _1^2,\lambda _2^2,\cdot \cdot \cdot ,\lambda _{k_1}^2,\overbrace{0,\cdot \cdot \cdot ,0}^{\nu_1-2k_1}\},\\
spec(G_2^2)=\{\mu_1^2,\mu _2^2,\cdot \cdot \cdot ,\mu _{k_2}^2,\mu_1^2,\mu _2^2,\cdot \cdot \cdot ,\mu _{k_2}^2,\overbrace{0,\cdot \cdot \cdot ,0}^{\nu_2-2k_2}\}.
\end{eqnarray*}
Since $EE^T$ and $E^TE$ have the same non-zero eigenvalues and $FF^T$ and $F^TF$ have the same non-zero eigenvalues. Thus,
\begin{eqnarray*}
spec(EE^T)=\{\lambda _1^2,\lambda _2^2,\cdot \cdot \cdot ,\lambda _{k_1}^2,\overbrace{0,\cdot \cdot \cdot ,0}^{n_1-k_1}\},\label{eq4}\\
spec(FF^T)=\{\mu_1^2,\mu _2^2,\cdot \cdot \cdot ,\mu _{k_2}^2,\overbrace{0,\cdot \cdot \cdot ,0}^{n_3-k_2}\}.\label{eq5}
\end{eqnarray*}
By Lemma \ref{lem3.1.3},
\begin{eqnarray*}
\det\left[
\begin{array}{cc}
                  \lambda I_{n_3}&-F\\
                  -F^T& \lambda I_{n_4}\\
                  \end{array}
                  \right]
                  &=&\lambda ^{n_4}\det(\lambda I_{n_3}-\frac {1}{\lambda}FF^T)\\
                  &=&\lambda ^{n_4-n_3}\det(\lambda^2I_{n_3}-FF^T)\\
                  &=&\lambda ^{n_4-n_3}(\lambda ^2)^{n_3-k_2}\prod_{j=1}^{k_2}\left(\lambda^2-\mu_{j}^{2}\right)\\
                   &=&\lambda^{\nu_3-2k_2}\prod_{j=1}^{k_2}\left(\lambda^2-\mu_{j}^{2}\right)
\end{eqnarray*}
and
\begin{eqnarray*}
                  \det\left[
                  \begin{array}{cc}
                  \lambda I_{n_1}&-E\\
                 -E^T& \lambda I_{n_2}\\
                  \end{array}
             \right]
             &=&\lambda^{n_2}\det(\lambda I_{n_1}-\frac{1}{\lambda}EE^T)\\
             &=&\lambda^{n_2-n_1}\det(\lambda^2I_{n_1}-EE^T)\\
             &=&\lambda^{n_2-n_1}(\lambda ^2)^{n_1-k_1}\prod_{i=1}^{k_1}\left(\lambda^2-\lambda_{i}^{2}\right)\\
             &=&\lambda^{\nu_1-2k_1}\prod_{i=1}^{k_1}\left(\lambda^2-\lambda_{i}^{2}\right).
\end{eqnarray*}
Thus,
\begin{eqnarray*}
\Phi_A(G,\lambda)\!\!\!&=&\!\!\!\lambda^{\nu_1-2k_1+\nu_2-2k_2}(1-c_1c_2)\prod_{i=1}^{k_1}\left(\lambda^2-\lambda_{i}^{2}\right)\prod_{j=1}^{k_2}\left(\lambda^2-\mu_{j}^{2}\right)\\
                 \!\!\!&=&\!\!\!\lambda^{\nu_1-2k_1+\nu_2-2k_2}f(\lambda)\prod_{i=2}^{k_1}\left(\lambda^2-\lambda_{i}^{2}\right)\prod_{j=2}^{k_2}\left(\lambda^2-\mu_{j}^{2}\right).
\end{eqnarray*}
\end{proof}

\begin{rem}\label{re1}\rm
In general, 4 zeros of $f(\lambda)$ are all distinct. Furthermore, any two are not symmetric with respect to 0. If not, suppose that $\theta_1+\theta_2=0$. Since $f(\theta_1)=\theta_1^4-(q_1q_2+q_3q_4+\nu_1\nu_2)\theta_1^2-2(\nu_1\epsilon_2+\nu_2\epsilon_1){\theta_1}+q_1q_2q_3q_4-4\epsilon_1\epsilon_2=0$, it follows that $f(\theta_2)=f(-\theta_1)=\theta_1^4-(q_1q_2+q_3q_4+\nu_1\nu_2)\theta_1^2+2(\nu_1\epsilon_2+\nu_2\epsilon_1){\theta_1}+q_1q_2q_3q_4-4\epsilon_1\epsilon_2=4(\nu_1\epsilon_2+\nu_2\epsilon_1){\theta_1}\neq0$, a contradiction.
\end{rem}

The same approach is used for us to obtain the zeta function of the join of two semi-regular bipartite graphs.

\begin{thm}\label{thm3.2.1} Let $G=G_1\vee G_2$. Then
\begin{eqnarray*}
Z_G(u)^{-1}&=&(1-u^2)^{\epsilon_1+\epsilon_2+\nu_1\nu_2-\nu_1-\nu_2}x_1^{n_1-k_1}x_2^{n_2-k_1}x_3^{n_3-k_2}x_4^{n_4-k_2}h(u)\\
           & &\prod_{i=2}^{k_1}(x_1x_2-\lambda _i^2u^2)\prod_{j=2}^{k_2}(x_3x_4-\mu _j^2u^2),
\end{eqnarray*}
where $x_1=1+(q_1+\nu_2-1)u^2, x_2=1+(q_2+\nu_2-1)u^2, x_3=1+(q_3+\nu_1-1)u^2, x_4=1+(q_4+\nu_1-1)u^2,$ and
$$h(u)=(1-a_1a_2u^2 )(x_1x_2-q_1q_2u^2)(x_3x_4-q_3q_4u^2)$$ with $a_1=\frac {n_3(x_4+uq_3)+n_4(x_3+uq_4)}{x_3x_4-q_3q_4u^2}$ and
$a_2=\frac {n_1(x_2+uq_1)+n_2(x_1+uq_2)}{x_1x_2-q_1q_2u^2}.$
\end{thm}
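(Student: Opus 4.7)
The plan is to reduce everything to Bass's determinant formula
$Z_G(u)^{-1}=(1-u^2)^{m-n}f_G(u)$ and then carry out essentially the same block-matrix reduction used to derive the characteristic polynomial in Theorem~\ref{2.7}, only now the diagonal blocks of $I_n-uA(G)+u^2Q(G)$ are scalar multiples of identities rather than $\lambda I$. First I would count: $n=\nu_1+\nu_2$ and $m=\epsilon_1+\epsilon_2+\nu_1\nu_2$, so the prefactor is $(1-u^2)^{\epsilon_1+\epsilon_2+\nu_1\nu_2-\nu_1-\nu_2}$ as claimed. The degrees in $G$ are $q_1+\nu_2$, $q_2+\nu_2$, $q_3+\nu_1$, $q_4+\nu_1$ on $V_1,V_2,V_3,V_4$ respectively, so the diagonal entries of $I_n-uA(G)+u^2Q(G)$ on each block are precisely $x_1,x_2,x_3,x_4$. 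Writing this matrix in $2\times 2$ block form
$$\begin{pmatrix} B_1 & -uJ_{\nu_1\times\nu_2}\\ -uJ_{\nu_2\times\nu_1} & B_2\end{pmatrix},\qquad B_1=\begin{pmatrix} x_1I_{n_1} & -uE\\ -uE^T & x_2I_{n_2}\end{pmatrix},\ B_2=\begin{pmatrix} x_3I_{n_3} & -uF\\ -uF^T & x_4I_{n_4}\end{pmatrix},$$
sets the stage.

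Next, apply Lemma~\ref{lem3.1.3} to obtain $f_G(u)=\det(B_2)\det\bigl(B_1-u^2J_{\nu_1\times\nu_2}B_2^{-1}J_{\nu_2\times\nu_1}\bigr)$. Applying Lemma~\ref{lem3.1.1} to $B_2$ with row sums $x_3,-uq_3,-uq_4,x_4$ yields $1_{\nu_2}^TB_2^{-1}1_{\nu_2}=a_1$, hence $J_{\nu_1\times\nu_2}B_2^{-1}J_{\nu_2\times\nu_1}=a_1J_{\nu_1\times\nu_1}$. Then Lemma~\ref{lem3.1.2}, together with the identity $B_1^*=\det(B_1)B_1^{-1}$ and a second application of Lemma~\ref{lem3.1.1} to $B_1$ (giving $1_{\nu_1}^TB_1^{-1}1_{\nu_1}=a_2$), collapses the expression to
$$f_G(u)=\det(B_1)\det(B_2)\bigl(1-a_1a_2u^2\bigr).$$
This is exactly parallel to the corresponding step in the proof of Theorem~\ref{2.7}; only the scalar entries differ.

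Finally I would evaluate $\det(B_1)$ and $\det(B_2)$ by one more application of Lemma~\ref{lem3.1.3}, exactly as was done there for the characteristic polynomial. For instance,
$\det(B_2)=x_4^{n_4-n_3}\det(x_3x_4I_{n_3}-u^2FF^T)$, and using the spectrum of $FF^T$ established before Theorem~\ref{2.7} this becomes $x_3^{n_3-k_2}x_4^{n_4-k_2}\prod_{j=1}^{k_2}(x_3x_4-\mu_j^2u^2)$; symmetrically for $\det(B_1)$. Multiplying the three pieces together and then peeling off the $i=1,j=1$ factors from the products via $\lambda_1^2=q_1q_2$ and $\mu_1^2=q_3q_4$ (Lemma~\ref{lem3.1.4}) packages them into $h(u)$, leaving the two products running from $i=2$ and $j=2$ as stated.

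The step most prone to error is bookkeeping: the diagonal blocks are no longer $\lambda I$ but $x_iI$, so the Schur-complement reductions produce products of the form $x_ix_j-(\cdot)u^2$ instead of $\lambda^2-(\cdot)$, and the row sums feeding Lemma~\ref{lem3.1.1} are affine in $u$. Beyond verifying that these substitutions work out exactly and that the extracted $i=1,j=1$ factors assemble into the stated $h(u)$, the argument is a direct transcription of the Theorem~\ref{2.7} computation.
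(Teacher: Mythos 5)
Your proposal is correct and follows essentially the same route as the paper's proof: the same block decomposition of $I_n-uA(G)+u^2Q(G)$, the same sequence of Lemmas \ref{lem3.1.3}, \ref{lem3.1.1} and \ref{lem3.1.2} to collapse $f_G(u)$ to $\det(B_1)\det(B_2)(1-a_1a_2u^2)$, the same evaluation of the block determinants via the spectra of $EE^T$ and $FF^T$, and the same peeling off of the $i=1,j=1$ factors into $h(u)$. No gaps.
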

\begin{proof} Recall that
$$
A(G)= \left[
\begin{array}{ll}
A(G_{1})&J_{\nu_1\times \nu_2}\\
J_{\nu_2\times \nu_1}&A(G_{2})\\
\end{array}
\right],
$$
where
$
A(G_1)= \left[
\begin{array}{ll}
0&E\\
E^T&0\\
\end{array}
\right]$ and
$A(G_2)=\left[
\begin{array}{ll}
0&F\\
F^T&0\\
\end{array}
\right].
$

$$
D(G)= \left[
\begin{array}{cccc}
(q_1+\nu_2)I_{n_1}& & & \\
 &(q_2+\nu_2)I_{n_2}& & \\
 & &(q_3+\nu_1)I_{n_3}& \\
 & & &(q_4+\nu_1)I_{n_4}\\
\end{array}
\right].
$$
Thus
\begin{eqnarray*}
f_G(u) \!\!\!&=&\!\!\!\det\left(I_{\nu_1+\nu_2}-uA(G)+u^2(D(G)-I_{\nu_1+\nu_2})\right) \\
       \!\!\!&=&\!\!\!\det \left[
                  \begin{array}{cccc}
                 x_1I_{n_1}&-uE&-uJ_{n_1\times n_3}&-uJ_{n_1\times n_4}\\
-uE^T&x_2I_{n_2}&-uJ_{n_2\times n_3}&-uJ_{n_2\times n_4}\\
-uJ_{n_3\times n_1}&-uJ_{n_3\times n_2}&x_3I_{n_3}&-uF\\
-uJ_{n_4\times n_1}&-uJ_{n_4\times n_2}&-uF^T&x_4I_{n_4}\\
                  \end{array}
                   \right].
\end{eqnarray*}
By Lemma \ref{lem3.1.1},
\begin{eqnarray*}
1_{\nu_2}^T\left[
                  \begin{array}{cc}
                  x_3I_{n_3}&-uF\\
                  -uF^T& x_4I_{n_4}\\
                  \end{array}
             \right] ^{-1}1_{\nu_2}&=&a_1.
\end{eqnarray*}
Hence
\begin{eqnarray*}
J_{\nu_1\times \nu_2}\left[
                  \begin{array}{cc}
                  x_3I_{n_3}&-uF\\
                  -uF^T& x_4I_{n_4}\\
                  \end{array}
             \right] ^{-1}J_{\nu_2\times \nu_1}&=&a_1J_{\nu_1\times \nu_1}.
\end{eqnarray*}
By Lemmas \ref{lem3.1.3}, \ref{lem3.1.1} and \ref{lem3.1.2},
\begin{eqnarray*}
  f_G(u)\!\!\!&=&\!\!\!\det \left[\left[
                  \begin{array}{cc}
                  x_1I_{n_1}&-uE\\
                  -uE^T& x_2I_{n_2}\\
                  \end{array}
                  \right]-u^2J_{\nu_1\times \nu_2}\left[
                  \begin{array}{cc}
                  x_3I_{n_3}&-uF\\
                  -uF^T& x_4I_{n_4}\\
                  \end{array}
             \right] ^{-1}J_{\nu_2\times \nu_1}\right]\\
             & &\!\!\!\det  \left[
                  \begin{array}{cc}
                  x_3I_{n_3}&-uF\\
                  -uF^T& x_4I_{n_4}\\
                  \end{array}
                  \right]\\
             &=&\!\!\!\det\left[\left[
                  \begin{array}{cc}
                  x_1I_{n_1}&-uE\\
                  -uE^T& x_2I_{n_2}\\
                  \end{array}
             \right]-a_1u^2J_{\nu_1\times \nu_1}\right]
             \det  \left[
                  \begin{array}{cc}
                  x_3I_{n_3}&-uF\\
                  -uF^T& x_4I_{n_4}\\
                  \end{array}
             \right]\\
       &=&\!\!\!
      \left(\det\left[
                  \begin{array}{cc}
                  x_1I_{n_1}&-uE\\
-uE^T& x_2I_{n_2}\\
                  \end{array}
             \right]-a_1u^2 1_{\nu_1}^{T}\left(
                  \begin{array}{cc}
                  x_1I_{n_1}&-uE\\
-uE^T& x_2I_{n_2}\\
                  \end{array}
             \right)^\ast1_{\nu_1}\right)\\
           & &\!\!\!\det\left[
                  \begin{array}{cc}
                  x_3I_{n_3}&-uF\\
                  -uF^T& x_4I_{n_4}\\
                  \end{array}
                 \right]\\
&=&\!\!\!
                 \det\left[
                  \begin{array}{cc}
                  x_1I_{n_1}&-uE\\
-uE^T& x_2I_{n_2}\\
                  \end{array}
             \right]
             \left(1-a_1u^2 1_{\nu_1}^{T}\left(
                  \begin{array}{cc}
                  x_1I_{n_1}&-uE\\
-uE^T& x_2I_{n_2}\\
                  \end{array}
             \right)^{-1}1_{\nu_1}\right)\\
             & &\!\!\!\det\left[
                  \begin{array}{cc}
                  x_3I_{n_3}&-uF\\
                  -uF^T& x_4I_{n_4}\\
                  \end{array}
                 \right]\\
             &=&\!\!\!\det\left[
                  \begin{array}{cc}
                  x_3I_{n_3}&-uF\\
                  -uF^T& x_4I_{n_4}\\
                  \end{array}
                 \right]
                 \det\left[
                  \begin{array}{cc}
                  x_1I_{n_1}&-uE\\
-uE^T& x_2I_{n_2}\\
                  \end{array}
             \right]
             \left(1-a_1a_2u^2\right),
\end{eqnarray*}
and
\begin{eqnarray*}
\det  \left[
                  \begin{array}{cc}
                  x_3I_{n_3}&-uF\\
                  -uF^T& x_4I_{n_4}\\
                  \end{array}
                  \right]
                  &=&x_4^{n_4}\det(x_3I_{n_3}-\frac {u^2}{x_4}FF^T)\\
                  &=&x_4^{n_4-n_3}\det(x_3x_4I_{n_3}-u^2FF^T),\\
                  \det\left[
                  \begin{array}{cc}
                  x_1I_{n_1}&-uE\\
                 -uE^T& x_2I_{n_2}\\
                  \end{array}
             \right]
             &=&x_2^{n_2}\det(x_1I_{n_1}-\frac{u^2}{x_2}EE^T)\\
             &=&x_2^{n_2-n_1}\det(x_1x_2I_{n_1}-u^2EE^T).
\end{eqnarray*}
Then
\begin{eqnarray*}
f_G(u) &=&(1-a_1a_2u^2)x_2^{n_2-n_1}\det(x_1x_2I_{n_1}-u^2EE^T)\\
       & &x_4^{n_4-n_3}\det(x_3x_4I_{n_3}-u^2FF^T).
\end{eqnarray*}
In the proof of Theorem \ref{2.7}, we obtained that
\begin{eqnarray*}
spec(EE^T)=\{\lambda _1^2,\lambda _2^2,\cdot \cdot \cdot ,\lambda _{k_1}^2,\overbrace{0,\cdot \cdot \cdot ,0}^{n_1-k_1}\},\\
spec(FF^T)=\{\mu_1^2,\mu _2^2,\cdot \cdot \cdot ,\mu _{k_2}^2,\overbrace{0,\cdot \cdot \cdot ,0}^{n_3-k_2}\}.
\end{eqnarray*}
Hence,
\begin{eqnarray*}
f_G(u) &=&(1-a_1a_2u^2 )x_2^{n_2-n_1}x_4^{n_4-n_3}(x_1x_2)^{n_1-k_1}(x_3x_4)^{n_3-k_2}\\
       & &\prod_{i=1}^{k_1}(x_1x_2-\lambda _i^2u^2) \prod_{j=1}^{k_2}(x_3x_4-\mu _j^2u^2)\\
       &=&(1-a_1a_2u^2 )x_1^{n_1-k_1}x_2^{n_2-k_1}x_3^{n_3-k_2}x_4^{n_4-k_2}\\
       & &\prod_{i=1}^{k_1}(x_1x_2-\lambda _i^2u^2)\prod_{j=1}^{k_2}(x_3x_4-\mu _j^2u^2)\\
       &=&(1-a_1a_2u^2 )(x_1x_2-q_1q_2u^2)(x_3x_4-q_3q_4u^2)\\
       & &x_1^{n_1-k_1}x_2^{n_2-k_1}x_3^{n_3-k_2}x_4^{n_4-k_2}\\
       & &\prod_{i=2}^{k_1}(x_1x_2-\lambda _i^2u^2)\prod_{j=2}^{k_2}(x_3x_4-\mu _j^2u^2)\\
       &=&x_1^{n_1-k_1}x_2^{n_2-k_1}x_3^{n_3-k_2}x_4^{n_4-k_2}h(u)\\
       & &\prod_{i=2}^{k_1}(x_1x_2-\lambda _i^2u^2)\prod_{j=2}^{k_2}(x_3x_4-\mu _j^2u^2).
\end{eqnarray*}
Note that $|E(G)|-|V(G)|=\epsilon_1+\epsilon_2+\nu_1\nu_2-\nu_1-\nu_2$. Then by $Z_G(u)^{-1}=(1-u^2)^{|E(G)|-|V(G)|}f_G(u)$, the theorem is established.
\end{proof}

\section{The complexity}

In this section, by using the relation between $\tau(G)$ and $f_G'(1)$, We derive an expression of the complexity of the join of two semi-regular bipartite graphs in terms of their spectra.

\begin{thm}\label{cor3.2.1}\rm
Let $G=G_1\vee G_2$. Then
\begin{eqnarray*}
\tau(G)&=&(q_1+q_2+\nu_2)(q_3+q_4+\nu_1)\\
       & &(q_1+\nu_2)^{n_1-k_1}(q_2+\nu_2)^{n_2-k_1}(q_3+\nu_1)^{n_3-k_2}(q_4+\nu_1)^{n_4-k_2}\\
       & &\prod_{i=2}^{k_1}[(q_1+\nu_2)(q_2+\nu_2)-\lambda _i^2]\prod_{j=2}^{k_2}[(q_3+\nu_1)(q_4+\nu_1)-\mu _j^2].
\end{eqnarray*}
\end{thm}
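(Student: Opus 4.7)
The plan is to apply Northshield's formula $f_G'(1)=2(m-n)\tau(G)$ to the factorization of $f_G(u)$ supplied by Theorem \ref{thm3.2.1}. Since the join adds $\nu_1\nu_2$ new edges, $m-n=\epsilon_1+\epsilon_2+\nu_1\nu_2-\nu_1-\nu_2$, so it suffices to compute $f_G'(1)$ and divide by $2(m-n)$.

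At $u=1$ we have $x_i(1)=q_i+\nu_j$ (with the appropriate $\nu_j$), and a short computation using the row-sum identities gives $a_1(1)=\nu_2/\nu_1$, $a_2(1)=\nu_1/\nu_2$. Hence the subfactor $(1-a_1a_2u^2)$ of $h(u)$ vanishes at $u=1$, while every other factor of $f_G(u)$ is nonzero at $u=1$ and its value matches a factor of the target formula. Using the basic identity $(q_i+\nu_j)(q_{i+1}+\nu_j)-q_iq_{i+1}=\nu_j(q_i+q_{i+1}+\nu_j)$, each factor $(x_1x_2-\lambda_i^2u^2)|_{u=1}$ becomes $(q_1+\nu_2)(q_2+\nu_2)-\lambda_i^2$ (and similarly for the $\mu_j$-products), exactly as required.

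Writing $f_G(u)=P(u)R(u)h(u)$ with $P(1),R(1)\neq 0$ and $h(1)=0$, the product rule gives $f_G'(1)=P(1)R(1)h'(1)$. To compute $h'(1)$ I would use the equivalent polynomial form
\[
h(u)=(x_1x_2-q_1q_2u^2)(x_3x_4-q_3q_4u^2)-u^2\bigl[n_3(x_4+uq_3)+n_4(x_3+uq_4)\bigr]\bigl[n_1(x_2+uq_1)+n_2(x_1+uq_2)\bigr],
\]
differentiate, and substitute $u=1$. The target is $h'(1)=2(m-n)(q_1+q_2+\nu_2)(q_3+q_4+\nu_1)$, after which $f_G'(1)/[2(m-n)]$ collapses to the claimed expression.

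The main obstacle is the algebraic simplification in $h'(1)$. After differentiating one encounters the asymmetric quantities $n_1q_2+n_2q_1$ and $n_3q_4+n_4q_3$, which are not directly expressible in the natural invariants $q_i,\nu_i,\epsilon_i$. The crucial trick is the identity $n_1q_2+n_2q_1=\nu_1(q_1+q_2)-2\epsilon_1$, obtained by expanding $(n_1+n_2)(q_1+q_2)$ and using $n_1q_1=n_2q_2=\epsilon_1$, together with its analogue for $G_2$. Once these are applied, the offending cross-terms cancel, the coefficient of $(q_1+q_2+\nu_2)(q_3+q_4+\nu_1)$ collapses to $2(m-n)$, and the stated formula for $\tau(G)$ follows.
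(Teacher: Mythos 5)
Your proposal is correct and follows essentially the same route as the paper: apply Northshield's formula to the factorization from Theorem \ref{thm3.2.1}, observe that $a_1(1)a_2(1)=1$ forces the factor $(1-a_1a_2u^2)$ to vanish at $u=1$ so that only its derivative survives the product rule, and verify that the derivative contributes exactly $2(m-n)$ divided by $\nu_1\nu_2(q_1+q_2+\nu_2)(q_3+q_4+\nu_1)$-type factors; your target value $h'(1)=2(m-n)(q_1+q_2+\nu_2)(q_3+q_4+\nu_1)$ and the identity $n_1q_2+n_2q_1=\nu_1(q_1+q_2)-2\epsilon_1$ are both correct and match what the paper uses (the paper differentiates $a_1,a_2$ by the quotient rule and invokes $q_1q_2\nu_1=(q_1+q_2)\epsilon_1$, which is the same simplification in a slightly different arrangement). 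The only thing left implicit is the routine expansion of $h'(1)$, which the paper carries out in full but which your outline correctly reduces to a finite check.
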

\begin{proof}
Let\begin{eqnarray*}
f_1(u)&=&\prod_{i=1}^{k_1}(x_1x_2-\lambda _i^2u^2),\\
f_2(u)&=&\prod_{j=1}^{k_2}(x_3x_4-\mu _j^2u^2),\\
  g(u)&=&(1-a_1a_2u^2 )x_1^{n_1-k_1}x_2^{n_2-k_1}x_3^{n_3-k_2}x_4^{n_4-k_2}.
\end{eqnarray*}
Then $f_G(u)=f_1(u)f_2(u)g(u)$ and
\begin{alignat*}{2}
x_1|_{u=1}&=q_1+\nu_2, &\quad x_2|_{u=1}&=q_2+\nu_2,\\
x_3|_{u=1}&=q_3+\nu_1, &\quad x_4|_{u=1}&=q_4+\nu_1,
\end{alignat*}
\begin{eqnarray*}
a_1|_{u=1}=\frac {n_3(q_4+\nu_1+q_3)+n_4(q_3+\nu_1+q_4)}{(q_3+\nu_1)(q_4+\nu_1)-q_3q_4}=\frac {\nu_2(q_3+q_4+\nu_1)}{\nu_1(q_3+q_4+\nu_1)}=\frac{\nu_2}{\nu_1},\\
a_2|_{u=1}=\frac {n_1(q_2+\nu_2+q_1)+n_2(q_1+\nu_2+q_2)}{(q_1+\nu_2)(q_2+\nu_2)-q_1q_2}=\frac {\nu_1(q_1+q_2+\nu_2)}{\nu_2(q_1+q_2+\nu_2)}=\frac{\nu_1}{\nu_2}.
\end{eqnarray*}
$$(1-a_1a_2)|_{u=1}=1-\frac{\nu_2}{\nu_1}\times \frac{\nu_1}{\nu_2}=0,$$
\begin{eqnarray*}
f_1(1)&=&\prod_{i=1}^{k_1}[(q_1+\nu_2)(q_2+\nu_2)-\lambda _i^2],\label{eq13}\\
f_2(1)&=&\prod_{j=1}^{k_2}[(q_3+\nu_1)(q_4+\nu_1)-\mu _j^2],\label{eq14}\\
  g(1)&=&0.\label{eq15}
\end{eqnarray*}
Since $f'_G(u)=f'_1(u)f_2(u)g(u)+f_1(u)f'_2(u)g(u)+f_1(u)f_2(u)g'(u)$, it follows that $f'_G(1)=f_1(1)f_2(1)g'(1)$. Since
\begin{eqnarray*}
g'(u)&=&(1-a_1a_2u^2)'x_1^{n_1-k_1}x_2^{n_2-k_1}x_3^{n_3-k_2}x_4^{n_4-k_2}\\
     & &+(1-a_1a_2u^2)(n_1-k_1)x_1^{n_1-k_1-1}x_{1}'x_2^{n_2-k_1}x_3^{n_3-k_2}x_4^{n_4-k_2}\\
     & &+(1-a_1a_2u^2)(n_2-k_1)x_1^{n_1-k_1}x_2^{n_2-k_1-1}x_{2}'x_3^{n_3-k_2}x_4^{n_4-k_2}\\
     & &+(1-a_1a_2u^2)(n_3-k_2)x_1^{n_1-k_1}x_2^{n_2-k_1}x_3^{n_3-k_2-1}x_{3}'x_4^{n_4-k_2}\\
     & &+(1-a_1a_2u^2)(n_4-k_2)x_1^{n_1-k_1}x_2^{n_2-k_1}x_3^{n_3-k_2}x_4^{n_4-k_2-1}x_{4}'.
\end{eqnarray*}
and $(1-a_1a_2u^2)|_{u=1}=0$, we have
\begin{eqnarray*}
g'(1)=(1-a_1a_2u^2)'|_{u=1}(q_1+\nu_2)^{n_1-k_1}(q_2+\nu_2)^{n_2-k_1}(q_3+\nu_1)^{n_3-k_2}(q_4+\nu_1)^{n_4-k_2}.
\end{eqnarray*}
Now we compute $(1-a_1a_2u^2)'|_{u=1}$.
\begin{eqnarray*}
(1-a_1a_2u^2)'|_{u=1}&=&-(a_{1}'a_2u^2+a_1a_{2}'u^2+2a_1a_2u)|_{u=1}\nonumber\\
                        &=&-(\frac{\nu_1}{\nu_2}a_{1}'+\frac{\nu_2}{\nu_1}a_{2}'+2)|_{u=1}.\label{eq17}
\end{eqnarray*}
Since
\begin{alignat*}{2}
x_{1}^{'}|_{u=1}&=2(q_1+\nu_2-1)u|_{u=1}=2(q_1+\nu_2-1), \\
x_{2}^{'}|_{u=1}&=2(q_2+\nu_2-1)u|_{u=1}=2(q_2+\nu_2-1),\\
x_{3}^{'}|_{u=1}&=2(q_3+\nu_1-1)u|_{u=1}=2(q_3+\nu_1-1),\\
x_{4}^{'}|_{u=1}&=2(q_4+\nu_1-1)u|_{u=1}=2(q_4+\nu_1-1),
\end{alignat*}
and
\begin{eqnarray*}
a_{1}^{'}|_{u=1}&=& \left\{\frac {[n_3(x_{4}^{'}+q_3)+n_4(x_{3}^{'}+q_4)](x_3x_4-q_3q_4u^2)}{(x_3x_4-q_3q_4u^2)^2}\right.\\
         & &\left.-\frac{[n_3(x_{4}+uq_3)+n_4(x_{3}+uq_4)](x_{3}^{'}x_4+x_3x_{4}^{'}-2q_3q_4u)}{(x_3x_4-q_3q_4u^2)^2}\right\}|_{u=1},\\
         &=&\frac {\nu_1[2(q_3+q_4)\nu_2+2\nu_2(\nu_1-1)-2\epsilon_2]}{(q_3+q_4+\nu_1)\nu_{1}^{2}}\\
         & &-\frac{\nu_2[4\nu_1(q_3+q_4)+4\nu_1(\nu_1-1)+2q_3q_4-2(q_3+q_4)]}{(q_3+q_4+\nu_1)\nu_{1}^{2}},\end{eqnarray*}
and
\begin{eqnarray*}
a_{2}^{'}|_{u=1}&=&\left\{\frac{[n_1(x_{2}^{'}+q_1)+n_2(x_{1}^{'}+q_2)](x_1x_2-q_1q_2u^2)}{(x_1x_2-q_1q_2u^2)^2}\right.\\
         & &\left.-\frac{[n_1(x_{2}+uq_1)+n_2(x_{1}+uq_2)](x_{1}^{'}x_2+x_1x_{2}^{'}-2q_1q_2u)}{(x_1x_2-q_1q_2u^2)^2}\right\}|_{u=1},\\
         &=&\frac {\nu_2[2(q_1+q_2)\nu_1+2\nu_1(\nu_2-1)-2\epsilon_1]}{(q_1+q_2+\nu_2)\nu_{2}^{2}}\\
         & &-\frac{\nu_1[4\nu_2(q_1+q_2)+4\nu_2(\nu_2-1)+2q_1q_2-2(q_1+q_2)]}{(q_1+q_2+\nu_2)\nu_{2}^{2}}.
         \end{eqnarray*}
Since
$$q_1q_2\nu_1=q_1q_2n_1+q_1q_2n_2=(q_1+q_2)\epsilon_1,$$
$$q_3q_4\nu_2=q_3q_4n_3+q_3q_4n_4=(q_3+q_4)\epsilon_2,$$
we have
\begin{eqnarray*}
& &-(\frac{\nu_1}{\nu_2}a_{1}^{'}+\frac{\nu_2}{\nu_1}a_{2}^{'}+2)|_{u=1}\\
&=&-\left\{2+\frac{\nu_1[2(q_3+q_4)\nu_2+2\nu_2(\nu_1-1)-2\epsilon_2]}{(q_3+q_4+\nu_1)\nu_{1}\nu_{2}}\right.\\
     & &-\frac{\nu_2[4\nu_1(q_3+q_4)+4\nu_1(\nu_1-1)+2q_3q_4-2(q_3+q_4)]}{(q_3+q_4+\nu_1)\nu_{1}\nu_{2}}\\
     & &+\frac {\nu_2[2(q_1+q_2)\nu_1+2\nu_1(\nu_2-1)-2\epsilon_1]}{(q_1+q_2+\nu_2)\nu_{1}\nu_{2}}\\
     & &\left.-\frac{\nu_1[4\nu_2(q_1+q_2)+4\nu_2(\nu_2-1)+2q_1q_2-2(q_1+q_2)]}{(q_1+q_2+\nu_2)\nu_{1}\nu_{2}}\right\}\\
     &=&-\frac{1}{\nu_{1}\nu_{2}(q_1+q_2+\nu_2)(q_3+q_4+\nu_1)}\times \{2\nu_{1}\nu_{2}(q_1+q_2+\nu_2)(q_3+q_4+\nu_1)\\
     & &+\nu_1(q_1+q_2+\nu_2)[2(q_3+q_4)\nu_2+2\nu_2(\nu_1-1)-2\epsilon_2]\\
     & &-\nu_2(q_1+q_2+\nu_2)[4\nu_1(q_3+q_4)+4\nu_1(\nu_1-1)+2q_3q_4-2(q_3+q_4)]\\
     & &+\nu_2(q_3+q_4+\nu_1)[2(q_1+q_2)\nu_1+2\nu_1(\nu_2-1)-2\epsilon_1]\\
     & &-\nu_1(q_3+q_4+\nu_1)[4\nu_2(q_1+q_2)+4\nu_2(\nu_2-1)+2q_1q_2-2(q_1+q_2)]\}\\
     &=&-\frac{1}{\nu_{1}\nu_{2}(q_1+q_2+\nu_2)(q_3+q_4+\nu_1)}\times\{2\nu_{1}\nu_{2}(q_1+q_2+\nu_2)(q_3+q_4+\nu_1)\\
     & &+(q_1+q_2+\nu_2)[2(q_3+q_4)(\nu_2-\nu_{1}\nu_{2}-\epsilon_2)+2\nu_{1}(\nu_{2}-\nu_1\nu_2-\epsilon_2)]\\
     & &+(q_3+q_4+\nu_1)[2(q_1+q_2)(\nu_1-\nu_{1}\nu_{2}-\epsilon_1)+2\nu_{2}(\nu_{1}-\nu_1\nu_2-\epsilon_1)]\}\\
     &=&-\frac{1}{\nu_{1}\nu_{2}(q_1+q_2+\nu_2)(q_3+q_4+\nu_1)}\times\{2\nu_{1}\nu_{2}(q_1+q_2+\nu_2)(q_3+q_4+\nu_1)\\
     & &+2(q_1+q_2+\nu_2)(q_3+q_4+\nu_1)(\nu_2-\nu_{1}\nu_{2}-\epsilon_2)\\
     & &+2(q_3+q_4+\nu_1)(q_1+q_2+\nu_2)(\nu_1-\nu_{1}\nu_{2}-\epsilon_1)\}\\
     &=&\frac{2(q_1+q_2+\nu_2)(q_3+q_4+\nu_1)(-\nu_1+\nu_{1}\nu_{2}+\epsilon_1-\nu_2+\nu_{1}\nu_{2}+\epsilon_2-\nu_{1}\nu_{2})}{\nu_{1}\nu_{2}(q_1+q_2+\nu_2)(q_3+q_4+\nu_1)}\\
     &=&\frac{2(\epsilon_1+\epsilon_2+\nu_{1}\nu_{2}-\nu_1-\nu_2)}{\nu_{1}\nu_{2}}.
\end{eqnarray*}
Thus
\begin{eqnarray*}
g'(1)&=&\frac{2(\epsilon_1+\epsilon_2+\nu_1\nu_2-\nu_1-\nu_2)}{\nu_1\nu_2}\\
     & &(q_1+\nu_2)^{n_1-k_1}(q_2+\nu_2)^{n_2-k_1}(q_3+\nu_1)^{n_3-k_2}(q_4+\nu_1)^{n_4-k_2}.
\end{eqnarray*}
Since $f'_G(1)=2(m-n)\tau(G)$, it follows that
\begin{eqnarray*}
\tau(G)&=&(q_1+\nu_2)^{n_1-k_1}(q_2+\nu_2)^{n_2-k_1}(q_3+\nu_1)^{n_3-k_2}(q_4+\nu_1)^{n_4-k_2}\\
       & &\prod_{i=1}^{k_1}[(q_1+\nu_2)(q_2+\nu_2)-\lambda _i^2]\prod_{j=1}^{k_2}[(q_3+\nu_1)(q_4+\nu_1)-\mu _j^2]\frac{1}{\nu_1\nu_2}\\
       &=&(q_1+\nu_2)^{n_1-k_1}(q_2+\nu_2)^{n_2-k_1}(q_3+\nu_1)^{n_3-k_2}(q_4+\nu_1)^{n_4-k_2}\\
       & &\prod_{i=2}^{k_1}[(q_1+\nu_2)(q_2+\nu_2)-\lambda _i^2]\prod_{j=2}^{k_2}[(q_3+\nu_1)(q_4+\nu_1)-\mu _j^2]\\
       & &\frac{[(q_1+\nu_2)(q_2+\nu_2)-q_1q_2][(q_3+\nu_1)(q_4+\nu_1)-q_3q_4]}{\nu_1\nu_2}\\
       &=&(q_1+q_2+\nu_2)(q_3+q_4+\nu_1)\\
       & &(q_1+\nu_2)^{n_1-k_1}(q_2+\nu_2)^{n_2-k_1}(q_3+\nu_1)^{n_3-k_2}(q_4+\nu_1)^{n_4-k_2}\\
       & &\prod_{i=2}^{k_1}[(q_1+\nu_2)(q_2+\nu_2)-\lambda _i^2]\prod_{j=2}^{k_2}[(q_3+\nu_1)(q_4+\nu_1)-\mu _j^2].
\end{eqnarray*}
\end{proof}

\begin{rem}\rm
It is well known that $\Phi_{L}'(G, 0)=(-1)^{|V(G)|-1}|V(G)|\tau(G)$ \cite{AB}. In the case that $G=G_1\vee G_2$, $\tau(G)$ can be obtained from the direct computation of the Laplacian characteristic polynomial of $G$ by using Lemmas  \ref{lem3.1.3}, \ref{lem3.1.1} and \ref{lem3.1.2}, we leave the details to the reader.
\end{rem}

In cases the spectra of two semi-regular bipartite graphs are known, Theorem \ref{cor3.2.1} can be very useful.

\begin{cor}\rm
Let $G=K_{m,n}\vee K_{p,q}$. Then
\begin{eqnarray*}
\tau(G)&=&(m+n+p+q)^2(m+p+q)^{n-1}(n+p+q)^{m-1}\\
       & &(p+m+n)^{q-1}(q+m+n)^{p-1}.
\end{eqnarray*}
\end{cor}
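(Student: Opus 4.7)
The plan is to apply Theorem \ref{cor3.2.1} directly, so the task reduces to identifying the parameters for $G_1 = K_{m,n}$ and $G_2 = K_{p,q}$ as semi-regular bipartite graphs and substituting. First, I would observe that $K_{m,n}$, viewed as a $(q_1,q_2)$-semi-regular bipartite graph with parts $V_1$ of size $m$ and $V_2$ of size $n$, has $q_1 = n$ and $q_2 = m$, so in the notation of the paper $n_1 = m$, $n_2 = n$, $\nu_1 = m+n$ and $\epsilon_1 = mn$. Similarly $K_{p,q}$ gives $n_3 = p$, $n_4 = q$, $q_3 = q$, $q_4 = p$, $\nu_2 = p+q$ and $\epsilon_2 = pq$.

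Next I would record the spectra. By Lemma \ref{lem3.1.4}, or by the well-known fact that $K_{a,b}$ has spectrum $\{\pm\sqrt{ab}, 0,\ldots,0\}$, the only positive eigenvalue of $K_{m,n}$ is $\lambda_1 = \sqrt{mn}$ and the only positive eigenvalue of $K_{p,q}$ is $\mu_1 = \sqrt{pq}$. Hence $k_1 = k_2 = 1$, and in the expression of Theorem \ref{cor3.2.1} both products $\prod_{i=2}^{k_1}$ and $\prod_{j=2}^{k_2}$ are empty.

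Substituting into Theorem \ref{cor3.2.1} then gives
\begin{eqnarray*}
\tau(G) &=& (q_1+q_2+\nu_2)(q_3+q_4+\nu_1)\\
        & & \times (q_1+\nu_2)^{n_1-1}(q_2+\nu_2)^{n_2-1}(q_3+\nu_1)^{n_3-1}(q_4+\nu_1)^{n_4-1}\\
        &=& (n+m+p+q)(q+p+m+n)\\
        & & \times (n+p+q)^{m-1}(m+p+q)^{n-1}(q+m+n)^{p-1}(p+m+n)^{q-1},
\end{eqnarray*}
which is exactly the claimed formula after collecting the factor $(m+n+p+q)^2$.

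There is essentially no obstacle here: the whole content is the determination of $k_1, k_2$ and a mechanical substitution, so the proof is a single short paragraph verifying these two inputs and invoking Theorem \ref{cor3.2.1}.
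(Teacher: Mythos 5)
Your proposal is correct and follows exactly the paper's own route: identify the semi-regular parameters of $K_{m,n}$ and $K_{p,q}$, note that each has a single positive eigenvalue so $k_1=k_2=1$ and the products in Theorem \ref{cor3.2.1} are empty, and substitute. The paper's proof is precisely this one-line substitution, so there is nothing to add.
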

\begin{proof}
Obviously, $q_1=n,q_2=m,q_3=q,q_4=p,n_1=m,n_2=n,n_3=p, n_4=q, \nu_1=m+n$ and $\nu_2=p+q$.
From \cite{AB}, we know that
\begin{eqnarray*}
spec(K_{m,n})&=&\{\pm \sqrt{mn},0,\cdot\cdot\cdot,0\},\\
spec(K_{p,q})&=&\{\pm \sqrt{pq},0,\cdot\cdot\cdot,0\}.
\end{eqnarray*}
Then the proof is straightforward  by Theorem \ref{cor3.2.1}.
\end{proof}

\begin{rem}\rm
The join $K_{m,n}\vee K_{p,q}$ is actually the complete multipartite graph $K_{m,n,p,q}$ and the complexity of the general complete multipartite graph $K_{a_1,a_2,\cdots,a_s}$ is known.
\end{rem}

\begin{example}
Let $G=C_{2m}\vee C_{2n}$. From \cite{AB}, we know that if $n$ is odd, then
$$spec(C_{2n})\!=\!\{\pm 2,\pm 2\cos\frac{\pi}{n},\pm 2\cos\frac{\pi}{n},\cdot\cdot\cdot,\pm 2\cos\frac{(n-1)\pi}{2n},\pm 2\cos\frac{(n-1)\pi}{2n}\},$$
if $n$ is even, then $$spec(C_{2n})=\{\pm 2,\pm 2\cos\frac{\pi}{n},\pm 2\cos\frac{\pi}{n},\cdot\cdot\cdot,\pm 2\cos\frac{(n-2)\pi}{2n},\pm 2\cos\frac{(n-2)\pi}{2n},0,0\}.$$
By Theorem \ref{cor3.2.1}, we show that \[\tau(G)=\left\{\begin{array}{ll}
(4+2m)(4+2n)\\
\prod_{i=1}^{\frac{m-1}{2}}[(2+2n)^2-4\cos^2\frac{i\pi}{m}]^2 & \text{both $m$ and $n$ are odd};\\
\prod_{j=1}^{\frac{n-1}{2}}[(2+2m)^2-4\cos^2\frac{i\pi}{n}]^2,\\
~~~~~\\
(4+2m)(4+2n)(2+2m)^2(2+2n)^2\\
\prod_{i=1}^{\frac{m-2}{2}}[(2+2n)^2-4\cos^2\frac{i\pi}{m}]^2 & \text{both $m$ and $n$ are even};\\
\prod_{j=1}^{\frac{n-2}{2}}[(2+2m)^2-4\cos^2\frac{i\pi}{n}]^2,\\
~~~~~\\
(4+2m)(4+2n)(2+2m)^2\\
\prod_{i=1}^{\frac{m-1}{2}}[(2+2n)^2-4\cos^2\frac{i\pi}{m}]^2 & \text{$m$ is odd and $n$ is even.}\\
\prod_{j=1}^{\frac{n-2}{2}}[(2+2m)^2-4\cos^2\frac{i\pi}{n}]^2,
\end{array}\right.\]

\end{example}

\section{The zeta function and the spectrum}\label{s5}

In this section, we explore whether the zeta function and the spectrum of the join of two semi-regular bipartite graph can determine each other.

\begin{thm}\rm
Let $G=G_1\vee G_2$, $G^{'}=G_1\vee G_{2}^{'}$. Then $Z_G(u)=Z_{G'}(u)$ if and only if $spec(G)=spec(G')$.
\end{thm}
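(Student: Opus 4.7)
The plan is to observe that both invariants are determined by the same underlying tuple of $G_2$-data. Set
$$\tau_2 := (\nu_2,\epsilon_2,q_3,q_4,n_3,n_4,k_2,\{\mu_j\}_{j\ge 2}).$$
Since $G_1$ is fixed, Theorem~\ref{2.7} and Theorem~\ref{thm3.2.1} both express $spec(G)$ and $Z_G(u)$ as explicit functions of $\tau_2$. It therefore suffices to prove that each of $spec(G)$ and $Z_G(u)$ individually recovers $\tau_2$; the biconditional then follows immediately, since $spec(G)=spec(G')$ iff $\tau_2=\tau_2'$ iff $Z_G(u)=Z_{G'}(u)$.

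For the direction $spec(G)\Rightarrow\tau_2$: counting eigenvalues returns $\nu_2$, and $\sum\lambda^2=2|E(G)|=2(\epsilon_1+\epsilon_2+\nu_1\nu_2)$ returns $\epsilon_2$. By Remark~\ref{re1} the four roots $\theta_1,\ldots,\theta_4$ of $f(\lambda)$ are precisely the eigenvalues in $spec(G)$ that do not belong to a $\pm$-pair among themselves, so they can be identified; the coefficient of $\lambda^2$ in $f(\lambda)=\prod(\lambda-\theta_i)$ then yields $q_3q_4$. Combined with $n_3+n_4=\nu_2$, $n_3q_3=n_4q_4=\epsilon_2$, and $n_3\le n_4$, this pins down $(q_3,q_4,n_3,n_4)$. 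Removing the $\theta_i$, the known $\pm\lambda_i$'s, and the zeros leaves $\{\pm\mu_j\}_{j\ge 2}$, and the multiplicity of $0$ fixes $k_2$. Feeding $\tau_2$ into Theorem~\ref{thm3.2.1} then gives $Z_G(u)=Z_{G'}(u)$.

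For the direction $Z_G(u)\Rightarrow\tau_2$: by \cite{CO,CD}, $|V(G)|$ and $|E(G)|$ are readable from $Z_G(u)$, returning $\nu_2$ and $\epsilon_2$. Forming $f_G(u)=(1-u^2)^{-(m-n)}Z_G(u)^{-1}$ and comparing with Theorem~\ref{thm3.2.1}, the linear-in-$u^2$ factors $x_3=1+(q_3+\nu_1-1)u^2$ and $x_4=1+(q_4+\nu_1-1)u^2$ appear with multiplicities $n_3-k_2$ and $n_4-k_2$. Once the a priori known $G_1$-contribution $x_1^{n_1-k_1}x_2^{n_2-k_1}$ is divided out, $x_3,x_4$ can be singled out; since $\nu_1$ is known their roots yield $q_3,q_4$. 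Then $\nu_2=n_3+n_4$ and $\epsilon_2=n_3q_3$ give $n_3,n_4,k_2$, and the remaining quadratic-in-$u^2$ factors $x_3x_4-\mu_j^2u^2$ produce $\{\mu_j\}_{j\ge 2}$. Feeding $\tau_2$ into Theorem~\ref{2.7} then gives $spec(G)=spec(G')$.

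The main obstacle is the factor-identification in the direction $Z_G\Rightarrow\tau_2$, which is delicate in the presence of coincidences: for instance $q_3+\nu_1=q_1+\nu_2$ would make $x_3$ collide with $x_1$, or a factor $x_1x_2-\lambda_i^2u^2$ might share a root with the ``join piece'' $h(u)$. These degeneracies are resolved by exploiting that the $G_1$-contribution and the polynomial $h(u)$ (the unique factor whose zeros correspond, via the Ihara--Bass substitution, to the four non-$\pm$-paired eigenvalues $\theta_i$ supplied by Remark~\ref{re1}) are individually identifiable, so any excess multiplicity in a colliding factor is unambiguously attributed to the $G_2$-side.
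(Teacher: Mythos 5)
Your overall architecture coincides with the paper's: both directions are proved by showing that the relevant invariant determines the $G_2$-data $(\nu_2,\epsilon_2,q_3,q_4,n_3,n_4,k_2,\{\mu_j\})$, and your spectrum-to-zeta direction is essentially the paper's argument (the paper even cites \cite{LDQ} for the step you re-derive, namely that $\nu_2,\epsilon_2$ and $q_3q_4$ pin down $\{q_3,q_4\}$ and hence $n_3,n_4$).

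The gap is in the direction $Z_G(u)\Rightarrow spec(G)$, at precisely the point you flag. After dividing out the common factors, you must extract $q_3,q_4$ from the product $x_3^{n_3-k_2}x_4^{n_4-k_2}\prod_{j\ge 2}(x_3x_4-\mu_j^2u^2)\,h(u)$, and your resolution of the collision problem rests on the claim that $h(u)$ is ``individually identifiable'' because its zeros correspond, via an Ihara--Bass substitution, to the four eigenvalues $\theta_i$. No such correspondence exists here: $G$ is not regular, so $Q(G)$ is not a scalar matrix and the zeros of $f_G(u)$ are not in bijection with eigenvalues of $A(G)$ under any substitution; moreover $h$ has degree $8$ while there are only four $\theta_i$. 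Without that, nothing prevents, say, $x_3=x_1$ (i.e.\ $q_3+\nu_1=q_1+\nu_2$), or a quadratic factor $x_3x_4-\mu_j^2u^2$ splitting into linear-in-$u^2$ factors, or $h$ acquiring a factor $1+cu^2$, in which case the multiplicities of the linear factors cannot be apportioned between the $G_1$-side, the $G_2$-side and $h$, and $q_3,q_4$ are not recovered.

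The paper avoids factor identification entirely at this stage. It cancels the literally identical $G_1$-factors and powers of $(1-u^2)$ (legitimate once $\nu_2=\nu_2'$ and $\epsilon_2=\epsilon_2'$ are read off from the vertex and edge counts), then compares two specific coefficients of the resulting polynomial identity: the top coefficient, which yields $(q_3+\nu_1-1)^{n_3}(q_4+\nu_1-1)^{n_4}=(q_3'+\nu_1-1)^{n_3'}(q_4'+\nu_1-1)^{n_4'}$, and the top \emph{odd}-degree coefficient, whose only source is the degree-$5$ term of $h$. Taking the ratio of these two identities and substituting $q_3+q_4=\nu_2q_3q_4/\epsilon_2$ turns both sides into the same M\"obius-type function of $q_3q_4$ and $q_3'q_4'$ respectively, and a positivity estimate (using $n_2q_1+n_1q_2-2\epsilon_1=(n_2-n_1)(q_1-q_2)\ge 0$) forces $q_3q_4=q_3'q_4'$. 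Only after $\{q_3,q_4\}=\{q_3',q_4'\}$ is known does the paper match exponents and the $\mu_j$'s, via the observation that $x_3,x_4$ do not divide $x_3x_4-\mu_j^2u^2$ when $\mu_j\neq 0$. You would need to replace your identifiability claim with an argument of this coefficient-comparison type (or otherwise rule out the collisions rigorously) for the forward direction to stand.
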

\begin{proof}
Suppose that
\begin{eqnarray*}
spec(G)&=&\{\theta_1,\theta_2,\theta_3,\theta_4,\pm\lambda_2,\cdots,\pm\lambda_{k_{1}},\pm\mu_2,\cdots,\pm\mu_{k_{2}},\overbrace{0,\cdot\cdot\cdot\cdot\cdot\cdot,0}^{\nu_1-2k_1+\nu_2-2k_2}\},\\
spec(G')&=&\{\theta_{1}',\theta_{2}',\theta_{3}',\theta_{4}',\pm\lambda_2,\cdots,\pm\lambda_{k_{1}},\pm\mu_{2}',\cdots,\pm\mu_{k_{2}'}',\overbrace{0,\cdot\cdot\cdot\cdot\cdot\cdot,0}^{\nu_1-2k_1+\nu_{2}'-2k_{2}'}\},
\end{eqnarray*}
where $\theta_1,\theta_2,\theta_3,\theta_4$ are zeros of $f(\lambda)=\lambda^4-(q_1q_2+q_3q_4+\nu_1\nu_2)\lambda^2-2(\nu_1\epsilon_2+\nu_2\epsilon_1)\lambda+q_1q_2q_3q_4-4\epsilon_1\epsilon_2$ and
$\theta_{1}',\theta_{2}',\theta_{3}',\theta_{4}'$ are zeros of $\widetilde{f}(\lambda)=\lambda^4-(q_1q_2+q'_3q'_4+\nu_1\nu'_2)\lambda^2-2(\nu_1\epsilon'_2+\nu'_2\epsilon_1)\lambda+q_1q_2q'_3q'_4-4\epsilon_1\epsilon'_2$.

By Theorem \ref{thm3.2.1},
\begin{eqnarray*}
Z_G(u)^{-1}&=&(1-u^2)^{\epsilon_1+\epsilon_2+\nu_1\nu_2-\nu_1-\nu_2}x_1^{n_1-k_1}x_2^{n_2-k_1}x_3^{n_3-k_2}x_4^{n_4-k_2} \\
           & &\prod_{i=2}^{k_1}(x_1x_2-\lambda _i^2u^2) \prod_{j=2}^{k_2}(x_3x_4-\mu _j^2u^2)h(u) \\
 Z_{G'}(u)^{-1}
           &=&(1-u^2)^{\epsilon_1+\epsilon_{2}'+\nu_1\nu_{2}'-\nu_1-\nu_{2}'}{\widetilde{x}_1}^{n_1-k_1}{\widetilde{x}_2}^{n_2-k_1}{\widetilde{x}_3}^{n_{3}'-k_{2}'}{\widetilde{x}_4}^{n_{4}'-k_{2}'}\\
           & &\prod_{i=2}^{k_{1}}(\widetilde{x}_1\widetilde{x}_2-\lambda _{i}^{2}u^2) \prod_{j=2}^{k_{2}'}(\widetilde{x}_3\widetilde{x}_4-{\mu_{j}'}^2u^2)\widetilde{h}(u),
\end{eqnarray*}
where \begin{eqnarray*}
h(u)&=&(1-a_1a_2u^2)(x_1x_2-q_1q_2u^2)(x_3x_4-q_3q_4u^2),\\ \widetilde{h}(u)&=&(1-\widetilde{a}_1\widetilde{a}_2u^2)(\widetilde{x}_1\widetilde{x}_2-q_1q_2u^2)(\widetilde{x}_3\widetilde{x}_4-q_{3}'q_{4}'u^2),
\end{eqnarray*}
\begin{alignat*}{2}
x_1&=1+(q_1+\nu_2-1)u^2, &\quad x_2&=1+(q_2+\nu_2-1)u^2,\\
\widetilde{x}_1&=1+(q_1+\nu_{2}'-1)u^2, &\quad \widetilde{x}_2&=1+(q_2+\nu_{2}'-1)u^2,\\
x_3&=1+(q_3+\nu_1-1)u^2, &\quad x_4&=1+(q_4+\nu_1-1)u^2,\\
\widetilde{x}_3&=1+(q_{3}'+\nu_1-1)u^2, &\quad \widetilde{x}_4&=1+(q_{4}'+\nu_1-1)u^2,\\
a_1&=\frac {n_3(x_4+uq_3)+n_4(x_3+uq_4)}{x_3x_4-q_3q_4u^2}, &\quad a_2&=\frac {n_1(x_2+uq_1)+n_2(x_1+uq_2)}{x_1x_2-q_1q_2u^2},\\
\widetilde{a}_1&=\frac {n_{3}'(\widetilde{x}_4+uq_{3}')+n_{4}'(\widetilde{x}_3+uq_{4}')}{\widetilde{x}_3\widetilde{x}_4-q_{3}'q_{4}'u^2}, &\quad \widetilde{a}_2&=\frac {n_1(\widetilde{x}_2+uq_1)+n_2(\widetilde{x}_1+uq_2)}{\widetilde{x}_1\widetilde{x}_2-q_1q_2u^2}.
\end{alignat*}

($\Leftarrow$) Since $spec(G)=spec(G')$, it follows that $G$ and $G'$ have the same number of vertices, that is, $\nu_1+\nu_2=\nu_1+\nu_{2}'$ and hence $\nu_2=\nu_{2}'$. Then $x_1=\widetilde{x}_1$ and $x_2=\widetilde{x}_2$.
By Remark \ref{re1}, among $\theta_1,\theta_2,\theta_3,\theta_4$ there are no symmetric non-zero pairs. Thus $k_2=k_{2}^{'}$, $\{\pm\mu_2,\cdots,\pm\mu_{k_{2}}\}=\{\pm\mu_{2}',\cdots,\pm\mu_{k_{2}'}'\}$ and
$\{\theta_1,\theta_2,\theta_3,\theta_4\}=\{\theta_{1}',\theta_{2}',\theta_{3}',\theta_{4}'\}$.
Therefore $q_1q_2+q_3q_4+\nu_1\nu_2=q_1q_2+q_{3}'q_{4}'+\nu_{1}\nu_{2}'$,
$\nu_1\epsilon_2+\nu_2\epsilon_1=\nu_1\epsilon_{2}'+\nu_{2}'\epsilon_1$ and $q_1q_2q_3q_4-4\epsilon_1\epsilon_2=q_1q_2q_{3}'q_{4}'-4\epsilon_1\epsilon_{2}'$.
Then $q_3q_4=q_{3}'q_{4}'$ and $\epsilon_2=\epsilon_{2}'$.  In \cite{LDQ}, the authors proved that if $\nu_2=\nu_2'$, $\epsilon_2=\epsilon_2'$ and $q_3q_4=q_{3}'q_{4}'$, then $q_{3}=q_3'$ or $q_{3}=q_4'$. There are two cases.

{\bf Case 1.} If $q_3=q_{3}'$ and $q_4=q_{4}'$, then $n_3=n_{3}',n_4=n_{4}', x_3=\widetilde{x}_3$, $x_4=\widetilde{x}_4$, $a_1=\widetilde{a}_1$, $a_2=\widetilde{a}_2$. Thus $h(u)=\widetilde{h}(u)$,
\begin{alignat*}{2}
x_1^{n_1-k_1}&={\widetilde{x}_1}^{n_1-k_1},&\quad x_2^{n_2-k_1}&={\widetilde{x}_2}^{n_2-k_1},\quad \prod_{i=2}^{k_1}(x_1x_2-\lambda _i^2u^2)=\prod_{i=2}^{k_{1}}(\widetilde{x}_1\widetilde{x}_2-{\lambda _{i}}^2u^2),\\
x_3^{n_3-k_2}&={\widetilde{x}_3}^{n_{3}'-k_{2}'}, &\quad x_4^{n_4-k_2}&={\widetilde{x}_4}^{n_{4}'-k_{2}'},\quad \prod_{j=2}^{k_2}(x_3x_4-\mu _j^2u^2)=\prod_{j=2}^{k_{2}'}(\widetilde{x}_3\widetilde{x}_4-{\mu_{j}'}^2u^2).
\end{alignat*}
Hence $Z_G(u)^{-1}=Z_{{G}^{'}}(u)^{-1}$ by Theorem \ref{thm3.2.1}.

{\bf Case 2.} If $q_3=q_{4}'$ and $q_4=q_{3}'$. Then $n_3=n_{4}'$, $n_4=n_{3}', x_3=\widetilde{x}_4, x_4=\widetilde{x}_3, a_1=\widetilde{a}_1$, $a_2=\widetilde{a}_2$. Thus $h(u)=\widetilde{h}(u)$,
\begin{alignat*}{2}
x_1^{n_1-k_1}&={\widetilde{x}_1}^{n_1-k_1},&\quad x_2^{n_2-k_1}&={\widetilde{x}_2}^{n_2-k_1},\quad \prod_{i=2}^{k_1}(x_1x_2-\lambda _i^2u^2)=\prod_{i=2}^{k_{1}}(\widetilde{x}_1\widetilde{x}_2-{\lambda _{i}}^2u^2),\\
x_3^{n_3-k_2}&={\widetilde{x}_4}^{n_{4}'-k_{2}'}, &\quad x_4^{n_4-k_2}&={\widetilde{x}_3}^{n_{3}'-k_{2}'},\quad \prod_{j=2}^{k_2}(x_3x_4-\mu _j^2u^2)=\prod_{j=2}^{k_{2}'}(\widetilde{x}_3\widetilde{x}_4-{\mu_{j}'}^2u^2).
\end{alignat*}
Hence $Z_G(u)^{-1}=Z_{G'}(u)^{-1}$ by Theorem \ref{thm3.2.1}.

($\Rightarrow$)
If $G$ and $G'$ have the same zeta function, as pointed in the Introduction, they must have the same number of vertices and edges, that is, $\nu_1+\nu_2=\nu_1+\nu_{2}'$ and $\epsilon_1+\epsilon_2+\nu_1\nu_2=\epsilon_1+\epsilon_{2}'+\nu_1\nu_{2}'.$
Then $\nu_2=\nu_{2}'$ and $\epsilon_2=\epsilon_{2}'$. Thus
$x_1=\widetilde{x}_1$ and $x_2=\widetilde{x}_2$. By $Z_G(u)^{-1}=Z_{G'}(u)^{-1}$, we have
\begin{eqnarray}\label{dd}
x_3^{n_3-k_2}x_4^{n_4-k_2}\prod_{j=2}^{k_2}(x_3x_4-\mu _j^2u^2)h(u)={\widetilde{x}_3}^{n_{3}'-k_{2}'}{\widetilde{x}_4}^{n_{4}'-k_{2}'} \prod_{j=2}^{k_{2}'}(\widetilde{x}_3\widetilde{x}_4-{\mu_{j}'}^2u^2)\widetilde{h}(u).
\end{eqnarray}
And we know that
\begin{eqnarray*}
h(u)&=&(x_1x_2-q_1q_2u^2)(x_3x_4-q_3q_4u^2)-\\
& &(n_1x_2+n_2x_1+2\epsilon_1u)(n_3x_4+n_4x_3+2\epsilon_2u)u^2\\
&=&x_1x_2x_3x_4-q_3q_4u^2x_1x_2-q_1q_2u^2x_3x_4+q_1q_2q_3q_4u^4-\\
& &(n_1x_2+n_2x_1+2\epsilon_1u)(n_3x_4+n_4x_3+2\epsilon_2u)u^2\\
    &=&(q_1+\nu_2-1)(q_2+\nu_2-1)(q_3+\nu_1-1)(q_4+\nu_1-1)u^8+\\
    & &[(q_1+q_2+2\nu_2-2)(q_3+\nu_1-1)(q_4+\nu_1-1)+\\
    & &(q_3+q_4+2\nu_1-2)(q_1+\nu_2-1)(q_2+\nu_2-1)-\\
    & &q_3q_4(q_1+\nu_2-1)(q_2+\nu_2-1)-q_1q_2(q_3+\nu_1-1)(q_4+\nu_1-1)-\\
    & &(n_1q_2+n_2q_1+\nu_1\nu_2-\nu_1)(n_4q_3+n_3q_4+\nu_1\nu_2-\nu_2)]u^6-\\
    & &2[\epsilon_2(n_2q_1+n_1q_2+\nu_1\nu_2-\nu_1)+\epsilon_1(n_4q_3+n_3q_4+\nu_1\nu_2-\nu_2)]u^5+\\
    & &\cdots.
\end{eqnarray*}
Since the coefficients of the highest order of left and right hands of \eqref{dd} are the same, we have
\begin{eqnarray}
(q_3+\nu_1-1)^{n_3}(q_4+\nu_1-1)^{n_4}=(q_3'+\nu_1-1)^{n_3'}(q_4'+\nu_1-1)^{n_4'}. \label{eq31}
\end{eqnarray}
Moreover, the coefficients of the highest odd order of left and right hands of \eqref{dd} are the same, we have
\begin{eqnarray}\label{eq32}
& &[\epsilon_2(n_2q_1+n_1q_2+\nu_1\nu_2-\nu_1)+\epsilon_1(n_4q_3+n_3q_4+\nu_1\nu_2-\nu_2)]\nonumber \\
& &(q_3+\nu_1-1)^{n_3-1}(q_4+\nu_1-1)^{n_4-1}\nonumber\\
&=&[\epsilon_2(n_2q_1+n_1q_2+\nu_1\nu_2-\nu_1)+\epsilon_1(n_4'q_3'+n_3'q_4'+\nu_1\nu_2-\nu_2)]\nonumber\\
& &(q_3'+\nu_1-1)^{n_3'-1}(q_4'+\nu_1-1)^{n_4'-1}.
\end{eqnarray}
By \eqref{eq31} and \eqref{eq32} we know that
\begin{eqnarray}
& &\frac{(q_3+\nu_1-1)(q_4+\nu_1-1)}{\epsilon_2(n_2q_1+n_1q_2+\nu_1\nu_2-\nu_1)+\epsilon_1(n_4q_3+n_3q_4+\nu_1\nu_2-\nu_2)}\nonumber \\
&=&\frac{(q_3'+\nu_1-1)(q_4'+\nu_1-1)}{\epsilon_2(n_2q_1+n_1q_2+\nu_1\nu_2-\nu_1)+\epsilon_1(n_4'q_3'+n_3'q_4'+\nu_1\nu_2-\nu_2)}.\label{eq33}
\end{eqnarray}
Note that $\nu_1q_1q_2=\epsilon_1(q_1+q_2)$, $\nu_2q_3q_4=\epsilon_2(q_3+q_4)$, $\nu_2q_3'q_4'=\epsilon_2(q_3'+q_4')$. Hence $q_1+q_2=\frac{\nu_1}{\epsilon_1}q_1q_2$, $q_3+q_4=\frac{\nu_2}{\epsilon_2}q_3q_4$ and  $q_3'+q_4'=\frac{\nu_2}{\epsilon_2}q_3'q_4'$. Since $n_4q_3+n_3q_4=\nu_2(q_3+q_4)-2\epsilon_2$, $n_4'q_3'+n_3'q_4'=\nu_2(q_3'+q_4')-2\epsilon_2$,
\eqref{eq33} becomes

\begin{eqnarray}
& &\frac{(1+\frac{\nu_2(\nu_1-1)}{\epsilon_2})q_3q_4+(\nu_1-1)^2}{\epsilon_2(n_2q_1+n_1q_2+\nu_1\nu_2-\nu_1)+\epsilon_1(\frac{\nu_{2}^2}{\epsilon_2}q_3q_4-2\epsilon_2+\nu_1\nu_2-\nu_2)}\nonumber\\
&=&\frac{(1+\frac{\nu_2(\nu_1-1)}{\epsilon_2})q_3'q_4'+(\nu_1-1)^2}{\epsilon_2(n_2q_1+n_1q_2+\nu_1\nu_2-\nu_1)+\epsilon_1(\frac{\nu_{2}^2}{\epsilon_2}q_3'q_4'-2\epsilon_2+\nu_1\nu_2-\nu_2)}.\label{eq34}
\end{eqnarray}
By (\ref{eq34}), we obtain
\begin{eqnarray}
& &q_3q_4(1+\frac{\nu_2(\nu_1-1)}{\epsilon_2})[\epsilon_2(n_2q_1+n_1q_2+\nu_1\nu_2-\nu_1)+\epsilon_1(-2\epsilon_2+\nu_1\nu_2-\nu_2)]\nonumber\\
& &+q_3'q_4'\frac{(\nu_1-1)^2\epsilon_1\nu_{2}^2}{\epsilon_2}\nonumber\\
&=&q_3'q_4'(1+\frac{\nu_2(\nu_1-1)}{\epsilon_2})[\epsilon_2(n_2q_1+n_1q_2+\nu_1\nu_2-\nu_1)+\epsilon_1(-2\epsilon_2+\nu_1\nu_2-\nu_2)]\nonumber\\
& &+q_3q_4\frac{(\nu_1-1)^2\epsilon_1\nu_{2}^2}{\epsilon_2}.
\end{eqnarray}
Thus,
\begin{eqnarray*}
& &(1+\frac{\nu_2(\nu_1-1)}{\epsilon_2})[\epsilon_2(n_2q_1+n_1q_2+\nu_1\nu_2-\nu_1)+\epsilon_1(-2\epsilon_2+\nu_1\nu_2-\nu_2)]\\
& &-\frac{(\nu_1-1)^2\epsilon_1\nu_{2}^2}{\epsilon_2}\\
&=&\epsilon_2(n_2q_1+n_1q_2-2\epsilon_1+\nu_1\nu_2-\nu_1)+\\
& &\nu_2(\nu_1-1)(n_2q_1+n_1q_2-\epsilon_1+\nu_1\nu_2-\nu_1).
\end{eqnarray*}
Note that $n_2q_1+n_1q_2-2\epsilon_1=(n_2-n_1)(q_1-q_2)\geq0$, $\nu_1\nu_2-\nu_1>0$. Thus $\epsilon_2(n_2q_1+n_1q_2-2\epsilon_1+\nu_1\nu_2-\nu_1)+\nu_2(\nu_1-1)(n_2q_1+n_1q_2-\epsilon_1+\nu_1\nu_2-\nu_1)>0$
and hence $q_3q_4=q_3'q_4'$. Then $q_3=q_{3}'$ or $q_3=q_{4}'$. In the following we only consider the case $q_3=q_{3}'$ and leave the other case to the reader.

If $q_{3}'=q_3$ and $q_{4}'=q_4$, then $n_3=n_{3}', n_4=n_{4}', x_3=\widetilde{x}_3, x_4=\widetilde{x}_4, a_1=\widetilde{a}_1$ and $a_2=\widetilde{a}_2.$
Hence, $h(u)=\widetilde{h}(u)$.
Since $\mu_j\neq0$, $x_3\nmid x_{3}x_{4}-{\mu _{j}}^2u^2, x_4\nmid x_{3}x_{4}-{\lambda _{j}}^2u^2$ and $\mu_{j}'\neq0$, $\widetilde{x}_3\nmid \widetilde{x}_3\widetilde{x}_4-{\mu_{j}'}^2u^2$, $\widetilde{x}_4\nmid \widetilde{x}_3\widetilde{x}_4-{\mu_{j}'}^2u^2$, it follows that $n_3-k_2=n_{3}'-k_{2}', n_4-k_2=n_{4}'-k_{2}'.$
Then $k_2=k_{2}^{'}$. Hence
\begin{eqnarray}
\prod_{j=2}^{k_2}(x_3x_4-\mu_j^2u^2)=\prod_{j=2}^{k_{2}}(\widetilde{x}_3\widetilde{x}_4-{\mu_{j}'}^2u^2).\label{eq27}
\end{eqnarray}
If $u_1$ is a root of $x_3x_4-\mu_2^2u^2=0$, then $u_1$ is a root of $\widetilde{x}_3\widetilde{x}_4-{\mu_{j}'}^2u^2=0$. Thus
$\mu_2^2=\frac {x_3x_4}{u_{1}^2}$, ${\mu_{j}'}^2=\frac {\widetilde{x}_3\widetilde{x}_4}{u_{1}^{2}}$, $x_3x_4=\widetilde{x}_3\widetilde{x}_4$. Hence $\mu_2=\mu_{j}'$.  For $\mu_3,\ldots\mu_{k_2}$,
repeating the previous argument leads to $\{\mu_2,\mu_3,\ldots,\mu_{k_{2}}\}=\{\mu_{2}',\mu_{3}',\ldots,\mu_{k_{2}}'\}$, completing the proof.
\end{proof}

\section*{Acknowledgements}
\noindent

This work is supported by NSFC (No. 11671336) and the Fundamental Research
Funds for the Central Universities  (No. 20720190062).

\section*{References}

\bibliographystyle{model1b-num-names}
\bibliography{<your-bib-database>}

\end{document}